\documentclass[11pt]{amsart}

\usepackage[T1]{fontenc}
\usepackage{lmodern}
\usepackage[utf8]{inputenc}
\usepackage{amsmath,amssymb,amsthm,mathtools}
\usepackage{enumitem}
\usepackage{tikz}
\usetikzlibrary{arrows.meta,positioning,calc}
\usepackage[numbers,sort&compress]{natbib}
\usepackage{hyperref}
\usepackage{microtype}

\hypersetup{colorlinks=true,linkcolor=blue,citecolor=blue,urlcolor=blue}

\theoremstyle{plain}
\newtheorem{theorem}{Theorem}[section]
\newtheorem{lemma}[theorem]{Lemma}
\newtheorem{proposition}[theorem]{Proposition}
\newtheorem{corollary}[theorem]{Corollary}
\theoremstyle{definition}
\newtheorem{definition}[theorem]{Definition}
\newtheorem{remark}[theorem]{Remark}

\newcommand{\toq}{\to_q}
\newcommand{\Cyc}{\overrightarrow{C}}
\newcommand{\Id}{I}
\newcommand{\HH}{\mathcal H}
\newcommand{\DiCycles}{\mathsf{DiCycles}}
\newcommand{\Aset}{\mathsf A}
\newcommand{\Bset}{\mathsf B}
\newcommand{\Eset}{\mathsf E}

\title[The Quantum Homomorphism Orders of Graphs Are Universal]{The Quantum Homomorphism Orders of Graphs Are Universal}
\author{Yangjing Long}
\address{School of Mathematics and Statistics, Central China Normal University, 152 Luoyu Road, Hongshan District, Wuhan, Hubei 430079, China}
\email{yangjing@ccnu.edu.cn}

\begin{document}

\begin{abstract}
Quantum graph homomorphisms, introduced by Man\v{c}inska and Roberson, form a natural quantum relaxation of classical graph homomorphisms.  Since every classical homomorphism is quantum but not conversely, this relaxation can create new comparabilities and could in principle simplify the order by collapsing antichains and other incomparability patterns.  We prove that this does not happen.  In the order-theoretic sense, the quantum homomorphism orders of graphs remain as complicated as possible among countable partial orders: every countable partial order embeds into the quantum homomorphism quasi-order of finite directed graphs and also into that of finite undirected graphs, and hence into the corresponding quotient partial orders.

The result is also useful as a construction principle.  For every finite poset $P$, our proof gives finite undirected graphs $G_p$, $p\in P$, such that
\[
   p\le_P q
   \quad\Longleftrightarrow\quad
   G_p\toq G_q.
\]
Thus any prescribed finite pattern of quantum homomorphisms and non-homomorphisms can be implemented by ordinary finite graphs, including antichains, chains, diamonds, crowns, Boolean lattices, and arbitrary finite order patterns.  The finite construction is explicit: encode the poset by prime divisibility among directed cycles, and then replace each directed edge by one fixed undirected endpoint-forcing indicator.

For directed graphs, the proof uses the classical universality of the homomorphism order on finite disjoint unions of clockwise directed cycles, together with the fact that quantum homomorphisms between such directed cycles coincide with classical homomorphisms.  For undirected graphs, the main technical ingredient is a finite ordered indicator whose terminals are quantum endpoint-forcing.  In a classical indicator argument one controls the endpoint images $f(a)$ and $f(b)$.  In the quantum setting there is no single-valued image map; the replacement is the operator statement that
\[
   F_{a,p}F_{b,q}=0
\]
for every illegal ordered terminal pair $(p,q)$.  Thus the gadget converts pointwise endpoint forcing into joint-support forcing for projections.  Replacing each directed edge by this indicator embeds the directed-cycle order into the quantum homomorphism order of finite undirected graphs.  Thus quantum homomorphisms enlarge the classical homomorphism relation, but they do not collapse its order-theoretic complexity.  The proof is finite and combinatorial, and uses only the projection relations in the definition of a quantum homomorphism.
\end{abstract}

\maketitle

\section{Introduction}

For finite graphs $X$ and $Y$, a homomorphism $X\to Y$ is an adjacency-preserving map from $V(X)$ to $V(Y)$.  The existence of a homomorphism is a reflexive and transitive relation, and hence induces a partial order after quotienting by homomorphic equivalence.  This order is very rich: it is universal, in the sense that every countable partial order occurs as a suborder.  We use this term in the same order-theoretic sense throughout the paper.

The universality of graph homomorphism orders is a classical theme.  Hedrl\'{i}n proved that the homomorphism order of finite graphs is universal \cite{HedrlinUniversal}; see also the categorical representation theory of Pultr and Trnkov\'{a} \cite{PultrTrnkovaBook} and the monograph of Hell and Ne\v{s}et\v{r}il \cite{HellNesetril}.  Later work showed that universality persists in sparse or restricted classes.  Hubi\v{c}ka and Ne\v{s}et\v{r}il proved universality for finite oriented paths and for oriented trees and related simple graph classes \cite{HubickaNesetrilPaths,HubickaNesetrilTrees}.  Fiala, Hubi\v{c}ka and the author gave a divisibility-based universality argument using oriented cycles \cite{FialaHubickaLong}, and the stronger fractal property of the homomorphism order was subsequently treated by Fiala, Hubi\v{c}ka, Long and Ne\v{s}et\v{r}il \cite{FialaHubickaLongNesetril}.  Luchnikov, Wittebol and Zuiddam recently proved an analogous universality theorem for the asymptotic cohomomorphism order arising in the asymptotic spectrum of graphs \cite{LuchnikovWittebolZuiddam}.

Other homomorphism-type orders provide useful comparisons.  For example, Naserasr, Sen and Sopena studied the homomorphism order of signed graphs and proved lattice properties of that order \cite{NaserasrSenSopenaSignedOrder}; see also the update on signed-graph homomorphisms by Naserasr, Sopena and Zaslavsky \cite{NaserasrSopenaZaslavskySignedUpdate}.  These signed-graph results are classical rather than quantum, but they illustrate how graph-homomorphism orders change when the underlying category of graphs is modified.

Quantum homomorphisms were introduced by Man\v{c}inska and Roberson \cite{MRQuantumHom} as the existence of perfect quantum strategies for the graph homomorphism game.  Their work placed quantum coloring, quantum independence and quantum clique numbers in a common homomorphism framework, proved several structural properties, and studied the relationship between the quantum homomorphism order and the classical homomorphism order, including order-theoretic features of the former.  Roberson's thesis further developed the order-theoretic viewpoint, including a description of the quantum homomorphism order via measurement graphs \cite{RobersonThesis}.  This naturally leads to the question raised in \cite{MRQuantumHom}: which order-theoretic properties of the classical homomorphism order survive under the quantum relaxation?

It is important that this question is not a formal consequence of the classical universality theorem.  Every classical homomorphism is a quantum homomorphism, so
\[
   G\to H \quad\Longrightarrow\quad G\toq H.
\]
The converse is false in general: quantum homomorphisms may create comparabilities which do not exist classically, as already happens in quantum coloring.  From the order-theoretic viewpoint this relaxation is potentially destructive.  New quantum arrows could collapse antichains and other configurations of incomparability, so the universality of the classical homomorphism order does not by itself imply the universality of the quantum homomorphism order.  The central issue is whether the passage from classical to quantum homomorphisms preserves or destroys the order-theoretic complexity of graph homomorphisms.  Our answer is that this complexity survives: quantum homomorphisms add many new arrows, but not enough arrows to destroy universality.

The paper \cite{MRQuantumHom} has since become a basic reference for several related directions.  On the coloring side, the quantum chromatic number had already been studied by Cameron, Montanaro, Newman, Severini and Winter, who related it to clique number and orthogonal representations and gave an 18-vertex example with classical chromatic number $5$ and quantum chromatic number $4$ \cite{CameronMontanaroNewmanSeveriniWinter}.  Man\v{c}inska and Roberson later exhibited further unexpected phenomena in quantum colorings, including small separations between classical and quantum chromatic number \cite{MancinskaRobersonOddities}.  Man\v{c}inska, Roberson and Varvitsiotis studied the decidability of perfect entangled strategies for nonlocal games and showed that independent-set games are representative of difficult instances \cite{MancinskaRobersonVarvitsiotis}.  Ortiz and Paulsen studied quantum graph homomorphisms through operator systems and associated universal $C^*$-algebras, and introduced an operator-system notion of quantum core \cite{OrtizPaulsen}.  Roberson gave conic formulations of graph homomorphism relations, including quantum and related relaxations \cite{RobersonConic}.  Helton, Meyer, Paulsen and Satriano developed the algebraic approach to synchronous games and graph coloring games \cite{HeltonMeyerPaulsenSatriano}.  Atserias, Man\v{c}inska, Roberson, \v{S}\'amal, Severini and Varvitsiotis introduced quantum and non-signalling graph isomorphism games, linking graph isomorphism, nonlocal games and noncommutative feasibility problems \cite{AtseriasEtAl}.  Lupini, Man\v{c}inska and Roberson connected these games to quantum permutation groups and quantum automorphism groups \cite{LupiniMancinskaRoberson}.  Man\v{c}inska and Roberson later proved that quantum isomorphism is equivalent to equality of homomorphism counts from all planar graphs \cite{MancinskaRobersonPlanar}, showing that homomorphism-counting techniques also capture quantum symmetries.

There has also been renewed activity in the last few years around homomorphism games, homomorphism indistinguishability, and quantum relaxations of constraint satisfaction.  Brannan, Ganesan and Harris introduced the quantum-to-classical graph homomorphism game \cite{BrannanGanesanHarris}.  Harris proved that graph homomorphism games are universal for synchronous games and used this to study the quantum coloring problem \cite{HarrisUniversalityGames}.  Ciardo related quantum advantage in homomorphism-type CSP tasks to the polymorphism-minion framework from algebraic CSP complexity \cite{CiardoQuantumAdvantage}, and studied the possible size of the gap between classical and quantum chromatic number, including a quantum adjunction theorem for Pultr functors \cite{CiardoQuantumChromaticGap}.  Banakh, Ciardo, Kozik and Tu\l{}owiecki investigated classical simulation of quantum CSP strategies \cite{BanakhCiardoKozikTulowiecki}, while Ciardo, Joubert and Mottet introduced quantum polymorphisms and commutativity gadgets for quantum CSPs \cite{CiardoJoubertMottet}.  Karamlou studied quantum relaxations of CSP and structure isomorphism using the quantum monad viewpoint \cite{KaramlouQuantumRelaxations}.  Kornell and Lindenhovius recently constructed quantum graphs of homomorphisms in a closed symmetric monoidal category of quantum graphs; their construction is nonempty precisely when the corresponding homomorphism game has a perfect quantum strategy \cite{KornellLindenhovius}.  Homomorphism-counting viewpoints have also led to semidefinite-programming characterizations: Roberson and Seppelt related the Lasserre hierarchy for graph isomorphism to homomorphism indistinguishability \cite{RobersonSeppeltLasserre}, and Kar, Roberson, Seppelt and Zeman gave the analogous NPA hierarchy characterization for quantum isomorphism \cite{KarRobersonSeppeltZeman}.  These works use notions of quantum homomorphism in algebraic, categorical, computational, and nonlocal-game settings.  Our use of the word ``universal'' is different from the universality of homomorphism games in \cite{HarrisUniversalityGames}: here universality means embedding every countable partial order into the quantum homomorphism order.

Our main result is that the quantum homomorphism order retains this fundamental richness of the classical order: arbitrary countable patterns of comparability and incomparability still occur even after the quantum relaxation.

\begin{theorem}\label{thm:main}
The quasi-order induced by quantum homomorphisms on finite directed graphs is countably universal, and so is the quasi-order induced by quantum homomorphisms on finite undirected graphs.  Consequently, after quotienting by quantum homomorphic equivalence, the corresponding partial orders are countably universal.
\end{theorem}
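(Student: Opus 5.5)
The plan has two stages, directed and then undirected, as the abstract indicates.

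\emph{Directed graphs.} I would start from the classical theorem that finite disjoint unions of clockwise directed cycles $\Cyc_n$, ordered by homomorphism, form a countably universal quasi-order. For a set $S$ of lengths, $\bigsqcup_{n\in S}\Cyc_n \to \bigsqcup_{m\in T}\Cyc_m$ holds iff every $n\in S$ is divisible by some $m\in T$; for finite posets one uses squarefree products of primes, sending $p\in P$ to cycles whose lengths encode the down-set of $p$. So it suffices to show that for such unions $G\toq H$ implies $G\to H$. The converse is automatic. The key claim is that a quantum homomorphism $\Cyc_n\toq\Cyc_m$ forces $m\mid n$.

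To prove the claim, fix a projective measurement $(F_{x,y})$ satisfying $F_{x,y}F_{x',y'}=0$ whenever $xx'$ is an arc and $yy'$ is not. In $\Cyc_m$ each vertex $y$ has exactly one out-neighbour $y+1$. Hence $F_{x,y}=F_{x,y}\sum_{y'}F_{x+1,y'}=F_{x,y}F_{x+1,y+1}$, and taking adjoints gives $F_{x,y}=F_{x+1,y+1}F_{x,y}$. Walking around $\Cyc_n$ yields $F_{x,y}=F_{x,y}F_{x+n,y+n}=F_{x,y}F_{x,y+n}$. If $m\nmid n$, then $y+n\ne y$ in $\mathbb Z_m$, so orthogonality gives $F_{x,y}=0$ for every $y$. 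This contradicts $\sum_y F_{x,y}=\Id$.

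For disjoint unions, I would pick a component $\Cyc_n$ of $G$ and a fixed vertex $x$ on it. Since $\sum_y F_{x,y}=\Id$, some $F_{x,y}\neq0$, say with $y$ in component $\Cyc_m$. The same shifting argument, now inside the union $H$, gives $m\mid n$. This proves the directed part, and quotienting gives the partial-order statement.

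\emph{Undirected graphs.} I would replace each arc $uv$ of a directed graph $D$ by a fixed finite undirected rigid-type indicator $I$ with terminals $a,b$, producing $D*I$. Classically $D\to D'$ implies $D*I\to D'*I$, hence also a quantum homomorphism. The converse is the core of the proof. I need a gadget such that in any quantum homomorphism $D*I\toq D'*I$:
\begin{enumerate}[label=(\roman*)]
\item original vertices only go to original vertices, i.e.\ $F_{v,w}=0$ for $w$ an inner gadget vertex;
\item $F_{a,p}F_{b,q}=0$ unless $(p,q)$ is an arc of $D'$.
\end{enumerate}
Then the induced family $F'_{v,w}=F_{v,w}$ on original vertices is a quantum homomorphism $D\toq D'$, and the directed result finishes the argument. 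A natural choice is a gadget built from odd cycles or cliques of distinct sizes, so that its vertices have distinguishing local structure. For example, attach $K_4$s and $K_5$s asymmetrically, with no $K_4$ in $D'*I$ except inside gadgets. Quantum homomorphisms of cliques $K_k\toq H$ force the clique structure at the level of projections: $F_{x,y}F_{x',y'}=0$ unless $yy'$ is an edge, and a projection supported on $K_k$ must spread over a $K_k$ in $H$. Combined with summing orthogonal projections, this lets one push support along rigid substructures.

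The main obstacle is step (ii): proving endpoint forcing purely from the projection relations, without a single-valued image. I would try to design $I$ as a chain of rigid blocks in which each block's image projections can only sit on the corresponding block of a copy of $I$. I would then prove by the same multiplication trick used for cycles, $F_{x,y}=F_{x,y}\sum_{y'\sim y}F_{x',y'}$, that support propagates from $a$ to $b$ in order. This would give $F_{a,p}F_{b,q}=0$ for reversed or non-arc pairs. Countable universality then follows by embedding a countable universal poset into the directed-cycle order and composing.
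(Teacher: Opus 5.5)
Your directed half is correct and matches the paper's route: the paper combines a walk lemma with a reduction to one target component, while your out-degree-one propagation $F_{x,y}=F_{x,y}F_{x+1,\sigma(y)}$, with $\sigma$ the successor map of the target union, handles unions directly. The gap is the undirected half, which carries the real content of the theorem. You never specify the indicator $I$ and never prove (ii), and the tool you propose does not work. Cliques are not quantum-rigid markers. Man\v{c}inska and Roberson showed that the quantum independence number can exceed the independence number, so $K_k\toq H$ does not imply $K_k\subseteq H$. Hence ``no $K_4$ in $D'*I$ outside the gadgets'' localizes nothing by itself. Even for your specific targets you would need a proof, and the obvious one stalls. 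From $F_{x,y}\neq 0$ with $x$ in a $K_4$, you only get nonzero products $F_{x_1,z}F_{x_2,z'}F_{x_3,z''}$ with $zz'$ and $z'z''$ edges. The relations constrain only adjacent factors of a noncommutative product, so $zz''$ is not forced.

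The propagation identity $F_{x,y}=F_{x,y}\sum_{y'\sim y}F_{x',y'}$ does not close the gap either. It only confines support to walks in the target, which is the paper's walk lemma, and undirected walks are reversible and may backtrack. If $u\to v$ is an arc of $D'$, the spine of $J_{uv}$ read backwards is a walk of the right length from $v$ to $u$. So propagation along the spine can never exclude the reversed pair $(v,u)$. The asymmetry must come from the decorations, and the missing idea is a lemma showing, from the projection relations alone, that decorations are respected.

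The paper supplies this as color localization. For an $A$-root $x$, the apex of a cone over a private $C_5$, one has $F_{x,y}=0$ whenever $Y[N_Y(y)]$ is bipartite. The proof is an operator odd-cycle argument: with $S_i=\sum_{z\in N_0}F_{c_i,z}-\sum_{z\in N_1}F_{c_i,z}$, every $v\in\operatorname{Ran}F_{x,y}$ satisfies $\|S_iv\|=\|v\|$ and $S_{i+1}v=-S_iv$, which is impossible around an odd cycle. $B$- and $E$-roots sit in triangles with two roots of the previous color. They are localized by $P=C_1P=C_1C_2P=0$, which works for triangles precisely because only two factors occur. The spine has colors $E,A,A,B,B,A,E$. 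Inserting completeness at $x_1,\dots,x_5$ into $F_{a,p}F_{b,q}$ leaves only summands along color-preserving walks, and any such walk is the ordered spine of a single copy. Your condition (i) then follows from (ii) by completeness at the other end of an arc.
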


The paper has two closely related contributions.  First, it proves that quantum homomorphism orders retain the universality of the classical homomorphism order, despite the additional comparabilities created by quantum homomorphisms.  Second, it gives a projection-level replacement for the endpoint-forcing step used in classical indicator and arrow constructions.  The ordered indicator constructed below does not assert that two terminals have specified images; instead, it shows that the joint support of the two terminal projections vanishes on every illegal ordered pair.  This is the mechanism that lets directed information be encoded inside undirected quantum homomorphism constructions.

Let us spell out the meaning of the theorem.  Universality here means countable universality: for every countable partially ordered set $(P,\le_P)$ there is an injective assignment
\[
   p\longmapsto G_p
\]
from $P$ to finite graphs such that
\[
   p\le_P q
   \quad\Longleftrightarrow\quad
   G_p\toq G_q.
\]
Thus the quantum homomorphism order contains arbitrary countable configurations of comparability and incomparability.  In particular, every finite partial order can be found exactly inside the quantum homomorphism order of finite graphs.

Let us also clarify the role of quotienting and cores.  Classically, one often passes from the homomorphism quasi-order to a partial order by quotienting by homomorphic equivalence; every equivalence class then has a unique core.  For quantum homomorphisms one can define quantum homomorphic equivalence by
\[
   G\equiv_q H
   \quad\Longleftrightarrow\quad
   G\toq H\text{ and }H\toq G.
\]
A corresponding theory of quantum cores is more delicate, and we do not use it.  Our embeddings are proved directly at the level of the quasi-order by an if-and-only-if statement
\[
   G_p\toq G_q
   \quad\Longleftrightarrow\quad
   p\le_P q.
\]
Consequently, if $p\ne q$ in the embedded partial order, then the corresponding graphs are not quantum-homomorphically equivalent: otherwise both $G_p\toq G_q$ and $G_q\toq G_p$ would hold, forcing both $p\le_P q$ and $q\le_P p$.  Thus the embedding descends to an induced suborder of the quotient partial order.  No uniqueness theorem for quantum cores is required.

For finite partial orders this can be made completely explicit.  Given a finite poset $P$, choose distinct primes $\pi_r$, one for each $r\in P$, and put
\[
   N_p=\prod_{r:\,p\le_P r}\pi_r.
\]
Then
\[
   p\le_P q
   \quad\Longleftrightarrow\quad
   N_q\mid N_p.
\]
Since $\Cyc_m\to\Cyc_n$ if and only if $n\mid m$, the graphs obtained below by replacing each arc of $\Cyc_{N_p}$ with our endpoint-forcing indicator satisfy
\[
   p\le_P q
   \quad\Longleftrightarrow\quad
   \Phi(\Cyc_{N_p})\toq \Phi(\Cyc_{N_q}).
\]
For example, distinct primes give antichains of arbitrary size, powers of a prime give chains of arbitrary length, and the diamond poset can be represented by the four cycle lengths $210,6,10,2$.  Thus, for finite posets, the theorem is not merely existential: it gives a direct recipe for building finite undirected graphs with any prescribed pattern of quantum homomorphisms and non-homomorphisms.  This provides a concrete source of test configurations and gadgets---chains, antichains, diamonds, crowns, Boolean lattices, and arbitrary finite posets can all be realized exactly inside the quantum homomorphism order.  This illustrates that the passage from classical to quantum homomorphisms does not collapse the order-theoretic complexity of graph homomorphisms: the quantum order is still as complicated as a countable partial order can be.

The directed part is conceptually simple.  If $\Cyc_n$ denotes the clockwise directed cycle of length $n$, then
\[
   \Cyc_n\toq \Cyc_m
   \quad\Longleftrightarrow\quad
   \Cyc_n\to \Cyc_m
   \quad\Longleftrightarrow\quad
   m\mid n.
\]
It follows that the quantum order on finite disjoint unions of directed cycles agrees with the classical order.  Since the latter order is universal \cite{FialaHubickaLong}, the directed quantum homomorphism order is universal.

The undirected part is the main technical point of the paper.  Classical indicator and arrow constructions do not automatically transfer to the quantum setting: quantum homomorphisms are given by projections rather than set-valued maps, may split over orthogonal subspaces, and support-image intuition can be misleading.  Thus an undirected replacement gadget must forbid not only unintended classical images, but also unintended quantum correlations.  The endpoint-forcing indicator below is the mechanism that prevents unwanted quantum arrows and hence protects the incomparabilities needed for universality.  To avoid set-theoretic image arguments, we construct an ordered undirected indicator $J=J(a,b)$ with a quantum endpoint-forcing property.  For every irreflexive directed graph $H$ and every quantum homomorphism
\[
   F:J\toq H*J,
\]
where $H*J$ is obtained by replacing every arc $u\to v$ of $H$ by a copy of $J$ with $a$ identified with $u$ and $b$ identified with $v$, one has
\[
   F_{a,p}F_{b,q}=0
\]
unless $p,q$ are the ordered terminal vertices of a single copy of $J$ in $H*J$.

The point of the endpoint-forcing condition is the following.  In a classical indicator proof one often argues that, if a homomorphism $f:J\to H*J$ satisfies $f(a)=p$ and $f(b)=q$, then $(p,q)$ must be a legal ordered terminal pair.  A quantum homomorphism has no such single-valued map $f$; it is a family of projections $F_{x,y}$.  The appropriate replacement is therefore not a statement about endpoint images, but the vanishing statement
\[
   F_{a,p}F_{b,q}=0
\]
for illegal pairs.  Equivalently, the two terminal measurements have no joint support on such a pair.  This is the feature that allows the classical directed-cycle embedding to survive the quantum relaxation.

The construction of $J$ is finite and explicit.  Its spine has color sequence
\[
   E,A,A,B,B,A,E,
\]
where the ``colors'' $A,B,E$ are implemented by ordinary graph gadgets rather than by additional structure.  The key lemma is a quantum color-localization lemma showing that the projections corresponding to an $A$-root, a $B$-root, or an $E$-root vanish on target vertices not carrying the same gadget.  Thus a nonzero summand in the expansion of $F_{a,p}F_{b,q}$ forces a color-preserving walk of type $E,A,A,B,B,A,E$ in $H*J$, and the combinatorics of the spine implies that such a walk is necessarily the ordered spine of a single copy.

With this indicator in hand, the universality of the undirected order follows by embedding the classical order on finite disjoint unions of directed cycles into the quantum order of finite undirected graphs.  In this sense, the theorem provides an explicit encoding device for constructing prescribed order patterns inside the quantum homomorphism order of finite undirected graphs.  The proof is entirely finite and combinatorial, and uses only the projection relations in the definition of a quantum homomorphism.

\section{Quantum homomorphisms}

All graphs and digraphs in this paper are finite and loopless.  Undirected graphs are simple.  Directed graphs have no loops; parallel arcs will not be needed.  All Hilbert spaces appearing in quantum homomorphisms are finite-dimensional and nonzero.

\subsection{Definitions}

Let $X$ and $Y$ be finite undirected graphs.  A \emph{quantum homomorphism} from $X$ to $Y$ is a family of orthogonal projections
\[
   \{E_{x,y}:x\in V(X),\ y\in V(Y)\}
\]
on a nonzero finite-dimensional Hilbert space $\HH$ such that
\begin{align}
   \sum_{y\in V(Y)} E_{x,y} &= \Id &&\text{for every }x\in V(X), \label{eq:complete}\\
   E_{x,y}E_{x,y'} &=0 &&\text{whenever }y\ne y', \label{eq:samevertex}\\
   E_{x,y}E_{x',y'} &=0 &&\text{whenever }xx'\in E(X)\text{ and }yy'\notin E(Y). \label{eq:edgezero}
\end{align}
We write $X\toq Y$ when such a family exists.  This is the projective-measurement formulation of quantum graph homomorphisms from \cite{MRQuantumHom}.  Since $E_{x,y}$ and $E_{x,y'}$ are orthogonal projections for $y\ne y'$ and their sum is the identity, each set $\{E_{x,y}:y\in V(Y)\}$ is a projective measurement.

For directed graphs we use the same definition, replacing the edge condition \eqref{eq:edgezero} by
\[
   E_{x,y}E_{x',y'}=0
   \quad\text{whenever }(x,x')\in A(X)\text{ and }(y,y')\notin A(Y).
\]
The relation $\toq$ is reflexive and transitive, so it induces a quasi-order; the corresponding partial order is obtained by quotienting by the equivalence relation generated by $X\toq Y$ and $Y\toq X$.

\subsection{The walk lemma}

The following elementary lemma is the main tool used throughout the paper.

\begin{lemma}[Walk lemma]\label{lem:walk}
Let $X\toq Y$ be a quantum homomorphism, represented by projections $E_{x,y}$.  Suppose there is a directed walk of length $\ell$ from $x$ to $x'$ in $X$, but no directed walk of length $\ell$ from $y$ to $y'$ in $Y$.  Then
\[
   E_{x,y}E_{x',y'}=0.
\]
The same statement holds for undirected graphs, with walks interpreted in the undirected sense.
\end{lemma}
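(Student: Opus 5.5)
The plan is to insert resolutions of the identity along the walk and show that every term of the resulting expansion vanishes. Let $x=x_0,x_1,\dots,x_\ell=x'$ be a directed walk in $X$, so $(x_{i-1},x_i)\in A(X)$ for $1\le i\le\ell$. For each interior vertex $x_i$ with $1\le i\le \ell-1$, insert the completeness relation \eqref{eq:complete}, $\sum_{y_i\in V(Y)}E_{x_i,y_i}=\Id$, between $E_{x,y}$ and $E_{x',y'}$. This gives the expansion
\[
   E_{x,y}E_{x',y'}
   =\sum_{y_1,\dots,y_{\ell-1}\in V(Y)}
     E_{x_0,y_0}E_{x_1,y_1}\cdots E_{x_{\ell-1},y_{\ell-1}}E_{x_\ell,y_\ell},
\]
where we set $y_0=y$ and $y_\ell=y'$. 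The case $\ell=0$ is degenerate and is handled separately: then $x=x'$, and the absence of a length-$0$ walk means $y\ne y'$, so the claim is exactly \eqref{eq:samevertex}.

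For $\ell\ge 1$, fix a summand indexed by $(y_1,\dots,y_{\ell-1})$. Since $Y$ has no directed walk of length $\ell$ from $y$ to $y'$, the sequence $y_0,y_1,\dots,y_\ell$ is not such a walk. Hence there is some index $i$ with $(y_{i-1},y_i)\notin A(Y)$. Because $(x_{i-1},x_i)\in A(X)$, the directed edge condition gives $E_{x_{i-1},y_{i-1}}E_{x_i,y_i}=0$. This pair appears as adjacent factors in the summand, so the whole product is $0$. Every summand vanishes, and therefore $E_{x,y}E_{x',y'}=0$.

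The undirected case is proved in the same way using \eqref{eq:edgezero}. Since edges are symmetric, the orientation of each step does not matter.

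There is essentially no obstacle here. The only points to check are bookkeeping: the walk may revisit vertices of $X$, and the inserted $y_i$ may coincide with each other or with $y$ and $y'$. Neither causes trouble, because the argument uses only the identity $\sum_y E_{x_i,y}=\Id$ and the relation on consecutive factors, never distinctness.
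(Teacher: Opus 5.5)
Your proof is correct and is essentially the paper's argument. The paper inserts the resolution of the identity one vertex at a time and inducts on $\ell$, splitting on whether a walk of length $\ell$ from $y$ to the intermediate vertex $z$ exists. You insert all $\ell-1$ resolutions at once and note that each index sequence fails to be a walk at some consecutive pair. This unrolled version is the same expansion the paper itself uses later, in the proof of Proposition~\ref{prop:endpoint-forcing}.
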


\begin{proof}
For $\ell=0$ the claim is exactly \eqref{eq:samevertex}; for $\ell=1$ it is \eqref{eq:edgezero}.  Suppose the statement holds for walks of length $\ell$.  Let
\[
   x=x_0,x_1,\ldots,x_\ell,x_{\ell+1}=x'
\]
be a walk of length $\ell+1$ in $X$.  Insert the resolution of the identity at $x_\ell$:
\[
   E_{x,y}E_{x',y'}
   =\sum_{z\in V(Y)} E_{x,y}E_{x_\ell,z}E_{x',y'}.
\]
If there is no walk of length $\ell$ from $y$ to $z$, then the induction hypothesis gives $E_{x,y}E_{x_\ell,z}=0$.  If there is such a walk from $y$ to $z$, then, since there is no walk of length $\ell+1$ from $y$ to $y'$, there is no edge from $z$ to $y'$; hence $E_{x_\ell,z}E_{x',y'}=0$ by the edge-zero condition.  Thus every summand is zero.
\end{proof}

\section{Directed cycles}

Let $\Cyc_n$ denote the clockwise directed cycle on $n$ vertices, with arcs $i\to i+1\pmod n$.

\begin{proposition}\label{prop:cycles}
For $m,n\ge 2$,
\[
   \Cyc_n\toq \Cyc_m
   \quad\Longleftrightarrow\quad
   \Cyc_n\to \Cyc_m
   \quad\Longleftrightarrow\quad
   m\mid n.
\]
\end{proposition}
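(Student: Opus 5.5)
We prove the three implications in a cycle: $m\mid n\Rightarrow \Cyc_n\to\Cyc_m\Rightarrow \Cyc_n\toq\Cyc_m\Rightarrow m\mid n$.

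For the first implication, if $m\mid n$ then the map $i\mapsto i\bmod m$ is well defined on $\mathbb Z/n$. It sends each arc $(i,i+1)$ to an arc of $\Cyc_m$, so it is a homomorphism.

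For the second implication, every classical homomorphism $f$ yields a quantum one on the one-dimensional space $\mathbb C$. Put $E_{x,y}=1$ if $f(x)=y$ and $E_{x,y}=0$ otherwise; the three defining relations are then immediate.

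The substantive direction is $\Cyc_n\toq\Cyc_m\Rightarrow m\mid n$, and here we use the Walk lemma (Lemma~\ref{lem:walk}). In $\Cyc_m$, a directed walk of length $\ell$ from $y$ to $y'$ exists if and only if $y'\equiv y+\ell\pmod m$, because there is exactly one out-arc at each vertex.

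Fix the vertex $0$ of $\Cyc_n$. There is a directed walk of length $n$ from $0$ to itself, namely once around the cycle. Suppose $m\nmid n$. Then for every $y\in V(\Cyc_m)$ there is no directed walk of length $n$ from $y$ to $y$, since $y+n\not\equiv y\pmod m$. The Walk lemma, applied with $x=x'=0$ and $y=y'$, gives $E_{0,y}E_{0,y}=0$. Since $E_{0,y}$ is a projection, $E_{0,y}^2=E_{0,y}$, so $E_{0,y}=0$ for every $y$. Summing over $y$ and using \eqref{eq:complete} gives $\Id=\sum_y E_{0,y}=0$. This contradicts the assumption that the Hilbert space is nonzero. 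Hence $m\mid n$.

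I do not expect a real obstacle. The only points needing care are that the Walk lemma is invoked in its directed form with $x=x'$, which the lemma's statement allows since walks may be closed, and that the nonzero-dimension hypothesis is exactly what converts the vanishing of all $E_{0,y}$ into a contradiction.
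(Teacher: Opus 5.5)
Your proposal is correct and follows essentially the same route as the paper. The paper likewise uses $i\mapsto i \bmod m$ for sufficiency and applies the Walk lemma to the closed walk of length $n$ to get $E_{u,v}^2=0$. The only cosmetic difference is that the paper first picks $v$ with $E_{u,v}\neq 0$ and derives a contradiction, whereas you show every $E_{0,y}$ vanishes and contradict completeness on a nonzero space.
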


\begin{proof}
If $m\mid n$, the map $i\mapsto i\pmod m$ is a classical homomorphism $\Cyc_n\to\Cyc_m$, and hence gives a quantum homomorphism.

Conversely, suppose $\Cyc_n\toq \Cyc_m$, represented by projections $E_{u,v}$.  Fix $u\in V(\Cyc_n)$.  Since $\sum_v E_{u,v}=\Id$, there exists $v$ with $E_{u,v}\ne 0$.  There is a directed closed walk of length $n$ from $u$ to $u$ in $\Cyc_n$.  If there were no directed closed walk of length $n$ from $v$ to $v$ in $\Cyc_m$, Lemma \ref{lem:walk} would give $E_{u,v}^2=0$, a contradiction.  A directed closed walk of length $n$ exists in $\Cyc_m$ if and only if $m\mid n$.
\end{proof}

We also need the standard reduction for disjoint unions.

\begin{lemma}\label{lem:connected-to-component}
Let $X$ be a weakly connected directed graph, and let $Y=Y_1\sqcup\cdots\sqcup Y_t$ be a disjoint union of directed graphs.  If $X\toq Y$, then $X\toq Y_i$ for some $i$.
\end{lemma}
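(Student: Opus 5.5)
The plan is to decompose the identity into component projections and show that these do not depend on the vertex of $X$, so that a single nonzero one cuts out a quantum homomorphism into one component. Let $F_{x,y}$, $x\in V(X)$, $y\in V(Y)$, be the projections of a quantum homomorphism $X\toq Y$ on $\HH$. For each vertex $x\in V(X)$ and each index $i$, set
\[
   P_{x,i}=\sum_{y\in V(Y_i)} F_{x,y}.
\]
Because the $F_{x,y}$ for fixed $x$ are mutually orthogonal projections summing to $\Id$, each $P_{x,i}$ is a projection, and $\sum_i P_{x,i}=\Id$.

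The first and key step is to show that $P_{x,i}=P_{x',i}$ whenever $x$ and $x'$ are joined by an arc in either direction. Suppose $(x,x')\in A(X)$. For $j\ne i$, every pair $y\in Y_i$, $y'\in Y_j$ is a non-arc of $Y$, since the components are disjoint. The edge-zero condition therefore gives $F_{x,y}F_{x',y'}=0$, and hence $P_{x,i}P_{x',j}=0$ for $j\ne i$. We then compute
\[
   P_{x,i}=P_{x,i}\sum_j P_{x',j}=P_{x,i}P_{x',i}.
\]
Taking adjoints gives $P_{x,i}=P_{x',i}P_{x,i}$. For the other order we need $P_{x',j}P_{x,i}=0$ for $j\ne i$, which follows by taking adjoints of $P_{x,i}P_{x',j}=0$. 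So $P_{x',i}=P_{x',i}\sum_j P_{x,j}=P_{x',i}P_{x,i}$. Combining, $P_{x,i}=P_{x',i}P_{x,i}$ and $P_{x',i}=P_{x',i}P_{x,i}$, so $P_{x,i}=P_{x',i}$. The same argument applies if the arc is $(x',x)$. Since $X$ is weakly connected, the projections $P_i:=P_{x,i}$ do not depend on $x$.

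Next, since $\sum_i P_i=\Id\ne 0$, some $P_i$ is nonzero. We restrict to the subspace $\HH_i=P_i\HH$, which is nonzero and finite-dimensional. For $y\in Y_i$ we have $F_{x,y}\le P_{x,i}=P_i$, so $F_{x,y}$ commutes with $P_i$ and restricts to a projection $F'_{x,y}$ on $\HH_i$. We check the three conditions.
\begin{itemize}[label={}]
\item Completeness: $\sum_{y\in Y_i}F'_{x,y}=P_i|_{\HH_i}=\Id_{\HH_i}$.
\item Orthogonality: this is inherited directly from the $F_{x,y}$.
\item Edge-zero: if $(x,x')\in A(X)$ and $(y,y')\notin A(Y_i)$ with $y,y'\in Y_i$, then $(y,y')\notin A(Y)$, so $F_{x,y}F_{x',y'}=0$ and its restriction vanishes as well.
\end{itemize}
Hence the $F'_{x,y}$ form a quantum homomorphism $X\toq Y_i$.

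The main obstacle is the first step, the equality $P_{x,i}=P_{x',i}$ along an arc, and specifically getting the correct adjoint manipulations. It is the only place where weak connectivity and the absence of arcs between components are used. The undirected case is identical.
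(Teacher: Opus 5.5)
Your proof is correct and follows the paper's argument essentially step for step: you define the component projections $P_{x,i}$, show they agree along each arc because cross-component products vanish, use weak connectivity, and restrict to the range of a nonzero $P_i$. The only cosmetic difference is that the paper gets the second identity as $P_{x',i}=\sum_j P_{x,j}P_{x',i}=P_{x,i}P_{x',i}$ by inserting the resolution at $x$ on the left, rather than by taking adjoints.
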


\begin{proof}
Let $E_{x,y}$ represent a quantum homomorphism $X\toq Y$.  For $x\in V(X)$ and $i\in\{1,\ldots,t\}$, set
\[
   P_i^x=\sum_{y\in V(Y_i)}E_{x,y}.
\]
For fixed $x$, the projections $P_i^x$ are pairwise orthogonal and sum to $\Id$.  If $xx'$ is an arc of $X$, then there are no arcs between distinct components of $Y$, and hence $P_i^xP_j^{x'}=0$ for $i\ne j$.  Therefore
\[
   P_i^x=P_i^x\sum_j P_j^{x'}=P_i^xP_i^{x'}
   \quad\text{and}\quad
   P_i^{x'}=\sum_j P_j^xP_i^{x'}=P_i^xP_i^{x'},
\]
so $P_i^x=P_i^{x'}$.  Since $X$ is weakly connected, $P_i^x$ is independent of $x$; write it as $P_i$.

Because $\sum_iP_i=\Id$, some $P_i$ is nonzero.  Restricting the projections $E_{x,y}$, $y\in V(Y_i)$, to the nonzero space $\operatorname{Ran}(P_i)$ gives a quantum homomorphism $X\toq Y_i$.
\end{proof}

\begin{corollary}\label{cor:dicycles-q-classical}
Let $D$ and $D'$ be finite disjoint unions of clockwise directed cycles.  Then
\[
   D\toq D'
   \quad\Longleftrightarrow\quad
   D\to D'.
\]
\end{corollary}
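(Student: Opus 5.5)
The reverse implication is immediate, because every classical homomorphism is a quantum homomorphism. So the work lies in showing that $D\toq D'$ forces $D\to D'$. Write
\[
   D=\Cyc_{n_1}\sqcup\cdots\sqcup\Cyc_{n_s}
   \quad\text{and}\quad
   D'=\Cyc_{m_1}\sqcup\cdots\sqcup\Cyc_{m_t}.
\]
A classical homomorphism out of a disjoint union is the same thing as a family of homomorphisms, one from each component. It therefore suffices to show that, for each $j$, the cycle $\Cyc_{n_j}$ maps classically into $D'$.

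First I will restrict a quantum homomorphism $D\toq D'$ to the component $\Cyc_{n_j}$. Concretely, I keep only the projections $E_{x,y}$ with $x\in V(\Cyc_{n_j})$. This family still satisfies the completeness, orthogonality and arc conditions, because these conditions only involve vertices and arcs of the source, and every arc of $\Cyc_{n_j}$ is an arc of $D$. Hence $\Cyc_{n_j}\toq D'$.

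Next, $\Cyc_{n_j}$ is weakly connected, so Lemma~\ref{lem:connected-to-component} applies and gives an index $i$ with $\Cyc_{n_j}\toq \Cyc_{m_i}$. Proposition~\ref{prop:cycles} then yields $m_i\mid n_j$, and so there is a classical homomorphism $\Cyc_{n_j}\to\Cyc_{m_i}\subseteq D'$. Combining these maps over all $j$ gives $D\to D'$.

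There is a small edge case. Proposition~\ref{prop:cycles} assumes $m,n\ge 2$, which I take to be implicit in the notion of a directed cycle, since the graphs are loopless. No step here is a genuine obstacle. The only point to check is that restricting to a component preserves the quantum homomorphism axioms, and this is immediate because the Hilbert space is unchanged and is nonzero.
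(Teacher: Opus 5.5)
Your proposal is correct and follows the paper's proof: you restrict to each component, apply Lemma~\ref{lem:connected-to-component} to land in a single cycle $\Cyc_{m_i}$, use Proposition~\ref{prop:cycles} to get $m_i\mid n_j$, and assemble the resulting classical maps. The one difference is that you spell out why the restricted family of projections is still a quantum homomorphism $\Cyc_{n_j}\toq D'$, a step the paper leaves implicit.
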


\begin{proof}
Let $D=\bigsqcup_i \Cyc_{n_i}$ and $D'=\bigsqcup_j\Cyc_{m_j}$.  If $D\toq D'$, then by Lemma \ref{lem:connected-to-component}, each component $\Cyc_{n_i}$ quantumly maps to some $\Cyc_{m_j}$.  Proposition \ref{prop:cycles} implies $m_j\mid n_i$, hence $\Cyc_{n_i}\to\Cyc_{m_j}$.  Thus $D\to D'$.  The converse is immediate, since every classical homomorphism is a quantum homomorphism.
\end{proof}

Let $\DiCycles$ denote the class of finite disjoint unions of clockwise directed cycles, ordered by the existence of a directed graph homomorphism.  The classical homomorphism order on $\DiCycles$ is countably universal \cite{FialaHubickaLong}.  Therefore:

\begin{theorem}\label{thm:directed-universal}
The quantum homomorphism quasi-order of finite directed graphs is countably universal.  Consequently, the quotient partial order is countably universal.
\end{theorem}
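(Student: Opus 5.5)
The plan is to deduce the theorem directly from Corollary~\ref{cor:dicycles-q-classical} and the classical universality of $\DiCycles$ from \cite{FialaHubickaLong}.

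Fix a countable poset $(P,\le_P)$. By \cite{FialaHubickaLong}, the homomorphism order on $\DiCycles$ is countably universal. Hence there is an assignment $p\mapsto D_p$, with each $D_p$ a finite disjoint union of clockwise directed cycles, such that
\[
   p\le_P q \quad\Longleftrightarrow\quad D_p\to D_q .
\]
We may take $D_p$ to be pairwise non-equivalent representatives. Antisymmetry of $P$ then makes the assignment injective: $D_p\to D_q$ and $D_q\to D_p$ force $p=q$.

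Next, Corollary~\ref{cor:dicycles-q-classical} gives $D_p\to D_q \Longleftrightarrow D_p\toq D_q$ for all $p,q$. Combining the two equivalences,
\[
   p\le_P q\quad\Longleftrightarrow\quad D_p\toq D_q .
\]
This is an embedding of $P$ into the quantum homomorphism quasi-order of finite directed graphs. The graphs $D_p$ are finite, loopless and free of parallel arcs, provided all cycle lengths are at least $2$. This holds for the construction in \cite{FialaHubickaLong}. If some length were $1$ (a loop), we would multiply every cycle length by a fixed prime not otherwise used, which preserves divisibility relations.

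For the quotient, argue as in the introduction. If $D_p\equiv_q D_q$, then both $p\le_P q$ and $q\le_P p$ hold, so $p=q$. Thus distinct elements go to distinct equivalence classes. The induced map into the quotient partial order is order-preserving and order-reflecting, because $\toq$ between classes is defined through representatives.

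The only point needing care is the precise form of the cited universality. We need it to say that every countable poset embeds into $\DiCycles$ ordered by $\to$, rather than only into the quotient by homomorphic equivalence. This makes no difference, since choosing representatives converts either form into the other.
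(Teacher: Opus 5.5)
Your proposal is correct and follows the paper's own argument: Corollary~\ref{cor:dicycles-q-classical} shows the quantum and classical orders agree on $\DiCycles$, universality of $\DiCycles$ comes from \cite{FialaHubickaLong}, and the if-and-only-if lets the embedding pass to the quotient. Your remarks on injectivity and on ruling out length-one cycles are harmless refinements that the paper leaves implicit.
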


\begin{proof}
By Corollary \ref{cor:dicycles-q-classical}, the quantum order and the classical order agree on $\DiCycles$.  Since the classical order on $\DiCycles$ is countably universal, the quantum homomorphism quasi-order of finite directed graphs contains a countably universal induced suborder.  The if-and-only-if statement in Corollary~\ref{cor:dicycles-q-classical} also shows that this embedding descends to the quotient partial order.
\end{proof}

\section{A quantum endpoint-forcing indicator}

Let $J$ be an undirected graph with two distinguished vertices $a,b$.  We regard $a,b$ as ordered terminals.

\begin{definition}\label{def:replacement}
Let $H$ be an irreflexive directed graph.  The undirected graph $H*J$ is obtained as follows.  For every arc $u\to v$ of $H$, take a fresh copy $J_{uv}$ of $J$, identify the terminal $a$ of this copy with $u$, and identify the terminal $b$ with $v$.  The nonterminal vertices of the copies are all kept distinct.  If $H$ has isolated vertices, they remain as isolated vertices in $H*J$.
\end{definition}

\begin{figure}[htbp]
\centering
\begin{tikzpicture}[scale=0.75,
  conn/.style={circle,draw,fill=orange!18,inner sep=1.5pt,minimum size=8pt},
  Aroot/.style={circle,draw,fill=blue!12,inner sep=1.5pt,minimum size=6pt},
  Broot/.style={circle,draw,fill=green!18,inner sep=1.5pt,minimum size=6pt},
  lab/.style={font=\small},
  arr/.style={-{Latex[length=2mm]},thick}
]
\node[conn] (u) at (0,0) {$u$};
\node[conn] (v) at (1.5,0) {$v$};
\node[conn] (w) at (3.0,0) {$w$};
\draw[arr] (u)--(v);
\draw[arr] (v)--(w);
\node[lab] at (1.5,-0.65) {arcs in $D$};
\draw[arr] (3.7,0)--(4.7,0);
\node[lab,above] at (4.2,0.08) {$\Phi$};
\node[conn] (U) at (5.4,0) {$u$};
\node[Aroot] (a1) at (6.15,0) {$A$};
\node[Aroot] (a2) at (6.9,0) {$A$};
\node[Broot] (b1) at (7.65,0) {$B$};
\node[Broot] (b2) at (8.4,0) {$B$};
\node[Aroot] (a3) at (9.15,0) {$A$};
\node[conn] (V) at (9.9,0) {$v$};
\foreach \x/\y in {U/a1,a1/a2,a2/b1,b1/b2,b2/a3,a3/V}{\draw (\x)--(\y);}
\node[Aroot] (a4) at (10.65,0) {$A$};
\node[Aroot] (a5) at (11.4,0) {$A$};
\node[Broot] (b3) at (12.15,0) {$B$};
\node[Broot] (b4) at (12.9,0) {$B$};
\node[Aroot] (a6) at (13.65,0) {$A$};
\node[conn] (W) at (14.4,0) {$w$};
\foreach \x/\y in {V/a4,a4/a5,a5/b3,b3/b4,b4/a6,a6/W}{\draw (\x)--(\y);}
\draw[dashed,rounded corners] (5.25,-0.35) rectangle (10.05,0.35);
\draw[dashed,rounded corners] (9.75,-0.35) rectangle (14.55,0.35);
\node[lab] at (7.65,-0.75) {$J_{uv}$};
\node[lab] at (12.15,-0.75) {$J_{vw}$};
\node[lab] at (10.1,0.65) {undirected copies with ordered terminals};
\end{tikzpicture}
\caption{The ordered replacement construction.  Each arc $u\to v$ of a directed graph $H$ is replaced by a copy of the ordered indicator $J(a,b)$, with $a$ identified with $u$ and $b$ identified with $v$.  The resulting graph $H*J$ is undirected, but the ordered terminals of each copy remember the direction of the original arc.}
\label{fig:replacement-construction}
\end{figure}

\begin{definition}\label{def:endpoint-forcing}
An ordered indicator $J(a,b)$ is \emph{quantum endpoint-forcing} if, for every irreflexive directed graph $H$, every quantum homomorphism $F:J\toq H*J$, represented by projections $F_{x,y}$ with $x\in V(J)$ and $y\in V(H*J)$, satisfies
\[
   F_{a,p}F_{b,q}=0
\]
for all pairs $(p,q)$ which are not the ordered terminal pair of a copy $J_{uv}$, i.e. for all pairs not of the form $(u,v)$ with $u\to v$ an arc of $H$.
\end{definition}

Thus quantum endpoint forcing is a joint-support statement: it does not assign images to $a$ and $b$, but rules out simultaneous support of the terminal projections on illegal ordered pairs.  The rest of this section constructs a finite indicator with this property.

\subsection{Color gadgets}

The terms $A$-root, $B$-root and $E$-root are only names for vertices carrying the following ordinary graph gadgets.

\paragraph{The $A$ gadget.}
An $A$-root $\alpha$ is the apex of a cone over a private $5$-cycle.  Thus there are private vertices $c_0,c_1,c_2,c_3,c_4$ forming a cycle $c_0c_1c_2c_3c_4c_0$, and $\alpha$ is adjacent to every $c_i$.

\paragraph{The $B$ gadget.}
A $B$-root $\beta$ has two private adjacent $A$-roots $\alpha_1,\alpha_2$, and is adjacent to both of them.  Thus $\beta\alpha_1$, $\beta\alpha_2$ and $\alpha_1\alpha_2$ are edges; the two vertices $\alpha_1,\alpha_2$ each carry their own private $A$ gadgets.

\paragraph{The $E$ gadget.}
An $E$-root $\epsilon$ has two private adjacent $B$-roots $\beta_1,\beta_2$, and is adjacent to both of them.  Thus $\epsilon\beta_1$, $\epsilon\beta_2$ and $\beta_1\beta_2$ are edges; the vertices $\beta_1,\beta_2$ each carry their own private $B$ gadgets.

Whenever these gadgets are attached in the construction below, all private vertices are fresh and no extra edges are added.  Figure~\ref{fig:color-gadgets} displays the rooted gadgets explicitly.

\begin{figure}[htbp]
\centering
\begin{tikzpicture}[scale=0.78,every node/.style={transform shape},
    base/.style={circle,draw,fill=white,inner sep=0pt,minimum size=4.2pt},
    rootA/.style={circle,draw,fill=blue!14,inner sep=1.5pt,minimum size=9pt,font=\scriptsize},
    rootB/.style={circle,draw,fill=green!18,inner sep=1.5pt,minimum size=9pt,font=\scriptsize},
    rootE/.style={circle,draw,fill=orange!22,inner sep=1.5pt,minimum size=9pt,font=\scriptsize},
    lab/.style={font=\scriptsize},
    title/.style={font=\small\bfseries}
]
% Helper: an A-root with its private 5-cycle drawn below the root.
% #1 prefix, #2 root coordinate, #3 root label, #4 vertical drop, #5 cycle radius
\newcommand{\drawAcone}[5]{%
  \node[rootA] (#1root) at #2 {#3};
  \foreach \i in {0,...,4}{
    \node[base] (#1c\i) at ($(#1root)+(90+72*\i:#5)+(0,-#4)$) {};
    \draw (#1root)--(#1c\i);
  }
  \foreach \i/\j in {0/1,1/2,2/3,3/4,4/0}{\draw (#1c\i)--(#1c\j);}
}

% Panel (a): A-gadget
\node[title] at (0,3.35) {(a) $A$-gadget};
\drawAcone{Afull}{(0,2.75)}{$\alpha$}{1.28}{0.62}
\node[lab] at (0,0.25) {private $C_5$};
\node[lab,align=center] at (0,4.00) {$A$-root};

% Panel (b): B-gadget
\node[title] at (4.6,3.35) {(b) $B$-gadget};
\node[rootB] (Broot) at (4.6,2.95) {$\beta$};
\drawAcone{BAone}{(3.55,1.95)}{$\alpha_1$}{0.82}{0.36}
\drawAcone{BAtwo}{(5.65,1.95)}{$\alpha_2$}{0.82}{0.36}
\draw[thick] (Broot)--(BAoneroot);
\draw[thick] (Broot)--(BAtworoot);
\draw[thick] (BAoneroot)--(BAtworoot);
\node[lab,align=center] at (4.6,0.18) {two private adjacent\\$A$-roots};

% Panel (c): E-gadget
\node[title] at (10.7,3.35) {(c) $E$-gadget};
\node[rootE] (Eroot) at (10.7,2.95) {$\epsilon$};
\node[rootB] (EBone) at (9.35,2.10) {$\beta_1$};
\node[rootB] (EBtwo) at (12.05,2.10) {$\beta_2$};
\draw[thick] (Eroot)--(EBone);
\draw[thick] (Eroot)--(EBtwo);
\draw[thick] (EBone)--(EBtwo);
% The private B-gadget below beta_1
\drawAcone{EAone}{(8.80,1.18)}{}{0.48}{0.23}
\drawAcone{EAtwo}{(9.90,1.18)}{}{0.48}{0.23}
\draw[thick] (EBone)--(EAoneroot);
\draw[thick] (EBone)--(EAtworoot);
\draw[thick] (EAoneroot)--(EAtworoot);
% The private B-gadget below beta_2
\drawAcone{EAthree}{(11.50,1.18)}{}{0.48}{0.23}
\drawAcone{EAfour}{(12.60,1.18)}{}{0.48}{0.23}
\draw[thick] (EBtwo)--(EAthreeroot);
\draw[thick] (EBtwo)--(EAfourroot);
\draw[thick] (EAthreeroot)--(EAfourroot);
\node[lab,align=center] at (10.7,0.04) {two private adjacent $B$-roots;\\each carries a private $B$-gadget};

% Subtle separators between panels
\draw[gray!35,dashed] (2.25,-0.15)--(2.25,3.85);
\draw[gray!35,dashed] (7.40,-0.15)--(7.40,3.85);
\end{tikzpicture}
\caption{The three rooted graph gadgets used to implement the colors $A$, $B$ and $E$.  An $A$-root is the apex of a cone over a private $5$-cycle.  A $B$-root has two adjacent private $A$-roots in its neighbourhood, each carrying its own private $A$-gadget.  An $E$-root has two adjacent private $B$-roots in its neighbourhood, each carrying its own private $B$-gadget.  The labels and colors are only for exposition; the construction is an ordinary graph.}
\label{fig:color-gadgets}
\end{figure}

\begin{definition}[The indicator $J$]\label{def:indicator}
Let $J=J(a,b)$ be the following graph.  Start with a path
\[
   x_0-x_1-x_2-x_3-x_4-x_5-x_6,
\]
where $a=x_0$ and $b=x_6$.  Attach color gadgets to the spine vertices as follows:
\[
   x_0,x_6\text{ are }E\text{-roots},
\]
\[
   x_1,x_2,x_5\text{ are }A\text{-roots},
\]
\[
   x_3,x_4\text{ are }B\text{-roots}.
\]
Thus the spine has color sequence
\[
   E,A,A,B,B,A,E.
\]
No further edges are added.
\end{definition}

\begin{figure}[htbp]
\centering
\begin{tikzpicture}[scale=0.93,every node/.style={transform shape},
    spine/.style={circle,draw,fill=white,inner sep=1.5pt,minimum size=8pt,font=\scriptsize},
    rootA/.style={circle,draw,fill=blue!14,inner sep=1.6pt,minimum size=10pt,font=\scriptsize},
    rootB/.style={circle,draw,fill=green!18,inner sep=1.6pt,minimum size=10pt,font=\scriptsize},
    rootE/.style={circle,draw,fill=orange!22,inner sep=1.6pt,minimum size=10pt,font=\scriptsize},
    base/.style={circle,draw,fill=white,inner sep=0pt,minimum size=3.2pt},
    lab/.style={font=\scriptsize},
    title/.style={font=\small}
]
% Spine vertices
\node[rootE] (x0) at (0,0) {$E$};
\node[rootA] (x1) at (1.15,0) {$A$};
\node[rootA] (x2) at (2.30,0) {$A$};
\node[rootB] (x3) at (3.45,0) {$B$};
\node[rootB] (x4) at (4.60,0) {$B$};
\node[rootA] (x5) at (5.75,0) {$A$};
\node[rootE] (x6) at (6.90,0) {$E$};
\foreach \i/\j in {x0/x1,x1/x2,x2/x3,x3/x4,x4/x5,x5/x6}{\draw[thick] (\i)--(\j);}
\node[lab,below=4pt of x0] {$a=x_0$};
\node[lab,below=4pt of x1] {$x_1$};
\node[lab,below=4pt of x2] {$x_2$};
\node[lab,below=4pt of x3] {$x_3$};
\node[lab,below=4pt of x4] {$x_4$};
\node[lab,below=4pt of x5] {$x_5$};
\node[lab,below=4pt of x6] {$x_6=b$};

% Small attachment icons above each spine vertex: these indicate the full gadgets from Figure color-gadgets.
\newcommand{\tinyAicon}[2]{%
  \foreach \k in {0,...,4}{\node[base] (#1c\k) at ($(#2)+(90+72*\k:0.18)+(0,0.43)$) {};}
  \foreach \i/\j in {0/1,1/2,2/3,3/4,4/0}{\draw[thin] (#1c\i)--(#1c\j);}
  \foreach \k in {0,...,4}{\draw[thin] (#2)--(#1c\k);}
}
\newcommand{\tinyBicon}[2]{%
  \node[rootA,minimum size=5pt,inner sep=0pt] (#1a) at ($(#2)+(-0.20,0.62)$) {};
  \node[rootA,minimum size=5pt,inner sep=0pt] (#1b) at ($(#2)+(0.20,0.62)$) {};
  \draw[thin] (#2)--(#1a)--(#1b)--(#2);
}
\newcommand{\tinyEicon}[2]{%
  \node[rootB,minimum size=5pt,inner sep=0pt] (#1a) at ($(#2)+(-0.20,0.62)$) {};
  \node[rootB,minimum size=5pt,inner sep=0pt] (#1b) at ($(#2)+(0.20,0.62)$) {};
  \draw[thin] (#2)--(#1a)--(#1b)--(#2);
}
\tinyEicon{e0}{x0}
\tinyAicon{a1}{x1}
\tinyAicon{a2}{x2}
\tinyBicon{b3}{x3}
\tinyBicon{b4}{x4}
\tinyAicon{a5}{x5}
\tinyEicon{e6}{x6}

\draw[-{Latex[length=2.0mm]},thick] (0,-0.88)--(6.90,-0.88);
\node[lab] at (3.45,-1.15) {ordered spine};
\node[title] at (3.45,1.45) {spine color sequence $E,A,A,B,B,A,E$};
\end{tikzpicture}
\caption{The ordered indicator $J(a,b)$.  The spine is the path $x_0x_1\cdots x_6$ with terminals $a=x_0$ and $b=x_6$.  Each spine vertex carries the rooted gadget indicated by its label, as in Figure~\ref{fig:color-gadgets}.  The reverse color sequence is $E,A,B,B,A,A,E$, so the spine encodes direction.}
\label{fig:indicator-spine}
\end{figure}

\begin{figure}[htbp]
\centering
\begin{tikzpicture}[scale=0.95,
  rootA/.style={circle,draw,fill=blue!12,inner sep=1.8pt,minimum size=9pt},
  rootB/.style={circle,draw,fill=green!18,inner sep=1.8pt,minimum size=9pt},
  rootE/.style={circle,draw,fill=orange!18,inner sep=1.8pt,minimum size=9pt},
  lab/.style={font=\small}
]
\node[rootE] (f0) at (0,1.0) {$E$};
\node[rootA] (f1) at (1.0,1.0) {$A$};
\node[rootA] (f2) at (2.0,1.0) {$A$};
\node[rootB] (f3) at (3.0,1.0) {$B$};
\node[rootB] (f4) at (4.0,1.0) {$B$};
\node[rootA] (f5) at (5.0,1.0) {$A$};
\node[rootE] (f6) at (6.0,1.0) {$E$};
\foreach \i/\j in {f0/f1,f1/f2,f2/f3,f3/f4,f4/f5,f5/f6}{\draw[thick] (\i)--(\j);}
\node[lab,left=4pt of f0] {required};
\node[lab,right=4pt of f6] {ordered spine};
\node[rootE] (r0) at (0,-0.55) {$E$};
\node[rootA] (r1) at (1.0,-0.55) {$A$};
\node[rootB] (r2) at (2.0,-0.55) {$B$};
\node[rootB] (r3) at (3.0,-0.55) {$B$};
\node[rootA] (r4) at (4.0,-0.55) {$A$};
\node[rootA] (r5) at (5.0,-0.55) {$A$};
\node[rootE] (r6) at (6.0,-0.55) {$E$};
\foreach \i/\j in {r0/r1,r1/r2,r2/r3,r3/r4,r4/r5,r5/r6}{\draw[thick] (\i)--(\j);}
\node[lab,left=4pt of r0] {reverse};
\draw[red,very thick] (2.62,-0.92)--(3.38,-0.18);
\draw[red,very thick] (2.62,-0.18)--(3.38,-0.92);
\node[lab,right=4pt of r6] {wrong color pattern};
\node[lab] at (3,1.55) {$E,A,A,B,B,A,E$};
\node[lab] at (3,-1.15) {$E,A,B,B,A,A,E$};
\end{tikzpicture}
\caption{The endpoint-forcing mechanism.  A target walk with color sequence $E,A,A,B,B,A,E$ is forced to be the ordered spine of one copy of $J$.  The reversed spine has color sequence $E,A,B,B,A,A,E$ and therefore cannot realize the required pattern.}
\label{fig:endpoint-mechanism}
\end{figure}

If $Y=H*J$, let $\Aset(Y)$, $\Bset(Y)$ and $\Eset(Y)$ be the sets of all vertices of $Y$ which are, respectively, $A$-roots, $B$-roots and $E$-roots in one of the copies of $J$.  These three sets are pairwise disjoint by construction.  Since terminals are identified with vertices of $H$, a connector vertex of $H*J$ belongs to $\Eset(Y)$ precisely when it is incident with at least one copy of $J$.

The construction gives the following structural properties.

\begin{lemma}\label{lem:structural-color}
Let $Y=H*J$ for an irreflexive directed graph $H$.  Then:
\begin{enumerate}[label=\textup{(\roman*)}]
\item If $y\notin \Aset(Y)$, then the induced graph $Y[N_Y(y)]$ is bipartite.
\item If $y\notin \Bset(Y)$, then $Y[\Aset(Y)\cap N_Y(y)]$ contains no edge.
\item If $y\notin \Eset(Y)$, then $Y[\Bset(Y)\cap N_Y(y)]$ contains no edge.
\end{enumerate}
\end{lemma}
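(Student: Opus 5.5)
The plan is to prove all three parts by a direct local inspection of $Y=H*J$, after listing every edge of $Y$ by type. The only global feature is the identification of terminals with vertices of $H$. Each copy $J_{uv}$ meets the rest of $Y$ only in $u$ and $v$, and $u\ne v$ because $H$ is irreflexive. All nonterminal vertices of a copy, including all private gadget vertices, are fresh. Consequently two connector vertices are never adjacent, since the spine between them has length $6$. Moreover every edge of $Y$ lies inside a single copy of $J$, and is one of the following:
\begin{itemize}[label=$\circ$]
\item a spine edge $x_ix_{i+1}$;
\item an edge of a private $5$-cycle, or an apex--cycle edge of an $A$ gadget;
\item an edge $\beta\alpha_j$ or $\alpha_1\alpha_2$ of a $B$ gadget;
\item an edge $\epsilon\beta_j$ or $\beta_1\beta_2$ of an $E$ gadget.
\end{itemize}
Here $\Aset(Y)$ is taken to contain every $A$-root, namely the spine roots $x_1,x_2,x_5$ and the private roots $\alpha_1,\alpha_2$ of all $B$ gadgets. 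Similarly $\Bset(Y)$ contains $x_3,x_4$ and the private $\beta_1,\beta_2$ of all $E$ gadgets, and $\Eset(Y)$ consists of the connectors incident with some copy.

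For (ii) and (iii), I will argue through common neighbours rather than by enumerating neighbourhoods. An edge of $Y[\Aset(Y)\cap N_Y(y)]$ is an edge between two $A$-roots both adjacent to $y$, so I list all $A$--$A$ edges and their common neighbours.
\begin{itemize}[label=$\circ$]
\item The spine edge $x_1x_2$ has no common neighbour, since $N(x_1)\cap N(x_2)=\emptyset$: the $5$-cycles are private and $x_0,x_3$ are not adjacent to the other endpoint.
\item An edge $\alpha_1\alpha_2$ inside a $B$ gadget has the root $\beta$ as its unique common neighbour, and $\beta\in\Bset(Y)$.
\end{itemize}
This gives (ii). For (iii) the $B$--$B$ edges are handled the same way.
\begin{itemize}[label=$\circ$]
\item The spine edge $x_3x_4$ has no common neighbour: $x_2$ is adjacent only to $x_3$, $x_5$ only to $x_4$, and the $A$-roots hanging on $x_3$ and $x_4$ are private.
\item An edge $\beta_1\beta_2$ of an $E$ gadget has the root $\epsilon$ as its unique common neighbour, and $\epsilon\in\Eset(Y)$.
\end{itemize}

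For (i), I go through the vertex types $y\notin\Aset(Y)$ and describe $Y[N_Y(y)]$ in each case, checking that it has no odd cycle.
\begin{itemize}[label=$\circ$]
\item A $5$-cycle vertex $c_i$ has neighbourhood $\{c_{i-1},c_{i+1},\alpha\}$. This induces a path of length $2$, because $c_{i-1}c_{i+1}$ is not an edge of a $5$-cycle.
\item A spine $B$-root, say $x_3$, has neighbourhood $\{x_2,x_4,\alpha_1,\alpha_2\}$, which induces the single edge $\alpha_1\alpha_2$; the root $x_4$ is symmetric.
\item A private $B$-root $\beta_1$ of an $E$ gadget has neighbourhood $\{\epsilon,\beta_2,\alpha_1,\alpha_2\}$, which induces the two disjoint edges $\epsilon\beta_2$ and $\alpha_1\alpha_2$.
\item A connector $y$ has, for each incident copy, one spine $A$-neighbour ($x_1$ or $x_5$) together with the pair $\beta_1,\beta_2$ of that copy's $E$ gadget at $y$. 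Since connectors are pairwise nonadjacent and the gadgets are private, the induced graph is a matching on the pairs $\beta_1\beta_2$ plus isolated $A$-roots.
\item An isolated vertex of $H$ has empty neighbourhood.
\end{itemize}
All of these graphs are bipartite.

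The main obstacle is bookkeeping at connector vertices. A connector may lie in several copies, both as an $a$-terminal and as a $b$-terminal, and so carries several $E$ gadgets. One must make sure that identifying terminals creates no edge among the neighbours of a connector, and no extra common neighbour for the edges considered in (ii) and (iii). This follows from the facts that every edge lies inside one copy, that $u\ne v$ for each arc, and that nonterminal vertices are never shared between copies. I will state this explicitly first, and then the case checks above are routine.
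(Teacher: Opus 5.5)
Your proposal is correct and follows essentially the same route as the paper. For (i) you run a case analysis over the vertex types $y\notin\Aset(Y)$, showing that every component of $Y[N_Y(y)]$ is a vertex, an edge, or a path of length two. For (ii) and (iii) you observe that the only $A$--$A$ (respectively $B$--$B$) edges are the spine edge $x_1x_2$ (respectively $x_3x_4$), which has no common neighbour, and the private gadget edge, whose unique common neighbour is a $B$-root (respectively an $E$-root); your explicit remarks that every edge lies in a single copy and that connectors are pairwise nonadjacent are exactly the facts the paper uses, more tacitly, at connector vertices.
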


\begin{proof}
All private gadgets are disjoint, and the only edges of $Y$ are the spine edges and the gadget edges described in Definition~\ref{def:indicator}.  We use this explicitly below.  The possible vertices of $Y$ are connector vertices, internal spine vertices, $A$-roots, cone-base vertices in private $A$ gadgets, $B$-roots, and $E$-roots.  Connector vertices may be identified across several copies of $J$, while all private gadget vertices remain private.

For (i), let $y\notin\Aset(Y)$.  We check the possible types of $y$.  If $y$ is a cone-base vertex in a private $A$ gadget, then $N_Y(y)$ consists of the corresponding $A$-root and the two neighbouring cone-base vertices on the private $C_5$; the induced graph on this neighbourhood is a path of length two.
If $y$ is an internal spine vertex which is not an $A$-root, then it is a $B$-root, so it is covered by the next case.  If $y\in\Bset(Y)$, then the edges inside $N_Y(y)$ are only the private $A$-$A$ edge belonging to the $B$ gadget carried by $y$.  If this $B$-root is also one of the two private $B$-roots in an $E$ gadget, then there is additionally the edge joining the adjacent $E$-root to the other private $B$-root of that same $E$ gadget.  These edges are disjoint, and all other neighbours of $y$ are isolated inside $N_Y(y)$.
If $y\in\Eset(Y)$, then $y$ may be a connector incident with many copies of $J$.  In each $E$ gadget carried by one of these incident copies, the two private $B$-neighbours of $y$ are joined by one edge.  The spine neighbours of $y$ coming from incoming or outgoing copies are $A$-roots, but they are isolated in $Y[N_Y(y)]$: they are adjacent to $y$ and to vertices outside $N_Y(y)$, but not to other neighbours of $y$.  Hence $Y[N_Y(y)]$ is a disjoint union of edges and isolated vertices in this case.
Finally, if $y$ is an isolated connector of $H*J$, then $N_Y(y)=\varnothing$.
In all cases every connected component of $Y[N_Y(y)]$ is an isolated vertex, an edge, or a path of length two.  Thus $Y[N_Y(y)]$ is bipartite.

For (ii), suppose two vertices $r,s\in\Aset(Y)\cap N_Y(y)$ are adjacent.  By construction, an edge between two $A$-roots is of one of two kinds: either it is the spine edge $x_1x_2$ in a copy of $J$, or it is the private $A$-$A$ edge in the gadget of a $B$-root.  The spine edge $x_1x_2$ has no common neighbour in $Y$: the neighbours of $x_1$ are the terminal $x_0$, the spine vertex $x_2$, and vertices in its private $A$ gadget, whereas the neighbours of $x_2$ are $x_1$, $x_3$, and vertices in its private $A$ gadget.  Since the private gadgets are disjoint and no loops are present, there is no third vertex adjacent to both $x_1$ and $x_2$.  By contrast, the private $A$-$A$ edge in a $B$ gadget has, as its common neighbour, exactly the $B$-root carrying that gadget.  Therefore any vertex $y$ which has two adjacent $A$-neighbours must be a $B$-root.  This proves (ii).

For (iii), the proof is analogous.  An edge between two $B$-roots is either the spine edge $x_3x_4$ in a copy of $J$, or the private $B$-$B$ edge in the gadget of an $E$-root.  The spine edge $x_3x_4$ has no common neighbour in $Y$: the neighbours of $x_3$ are $x_2$, $x_4$, and vertices in its private $B$ gadget, while the neighbours of $x_4$ are $x_3$, $x_5$, and vertices in its private $B$ gadget; the private gadgets are disjoint.  The private $B$-$B$ edge in an $E$ gadget has, as its common neighbour, exactly the $E$-root carrying that gadget.  Therefore any vertex with two adjacent $B$-neighbours must be an $E$-root.  This proves (iii).
\end{proof}

\subsection{Quantum localization of colors}

The proof of endpoint forcing has three steps: local graph structure identifies the possible colors in $H*J$, the next lemma turns this into operator localization for projections, and Proposition~\ref{prop:endpoint-forcing} converts localization into joint-support endpoint forcing.

Before proving the localization lemma, we record the intuition behind its first part.  The argument mimics the classical fact that an odd cycle admits no $2$-colouring.  If an $A$-root were sent to a vertex whose neighbourhood is bipartite, then the private $5$-cycle adjacent to that root would be forced, on the support of the corresponding projection, to behave as though it were $2$-coloured by the two parts of that neighbourhood.  The proof below writes this odd-cycle obstruction directly in terms of the projection relations defining a quantum homomorphism.

\begin{lemma}[Color localization]\label{lem:color-localization}
Let $X$ be a graph containing vertices carrying the color gadgets above, let $Y=H*J$, and let $F:X\toq Y$ be a quantum homomorphism.  Then:
\begin{enumerate}[label=\textup{(\roman*)}]
\item If $x$ is an $A$-root in $X$ and $y\notin \Aset(Y)$, then $F_{x,y}=0$.
\item If $x$ is a $B$-root in $X$ and $y\notin \Bset(Y)$, then $F_{x,y}=0$.
\item If $x$ is an $E$-root in $X$ and $y\notin \Eset(Y)$, then $F_{x,y}=0$.
\end{enumerate}
\end{lemma}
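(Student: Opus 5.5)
The plan is to prove (i) first, by a projection-level odd-cycle argument, and then derive (ii) from (i) and (iii) from (ii) by a triangle argument. Each step uses the matching part of Lemma~\ref{lem:structural-color}.

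\emph{Part (i).} Fix an $A$-root $x$ with private $5$-cycle $c_0\cdots c_4$, and fix $y\notin\Aset(Y)$. By Lemma~\ref{lem:structural-color}(i), write $N_Y(y)=U\sqcup W$ with $U$ and $W$ independent. Since $xc_i\in E(X)$, condition \eqref{eq:edgezero} gives $F_{x,y}F_{c_i,z}=0$ for $z\notin N_Y(y)$. Put
\[
Q_i^U=\sum_{z\in U}F_{c_i,z},\qquad Q_i^W=\sum_{z\in W}F_{c_i,z}.
\]
Then
\[
F_{x,y}=F_{x,y}(Q_i^U+Q_i^W)\quad\text{and}\quad (Q_i^U+Q_i^W)F_{x,y}=F_{x,y}.
\]
Because $U$ and $W$ are independent and $c_ic_{i+1}\in E(X)$, we also get $Q_i^UQ_{i+1}^U=0=Q_i^WQ_{i+1}^W$.

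I will show that $F_{x,y}$ commutes with each $Q_i^U$ and $Q_i^W$, and that on $\operatorname{Ran}F_{x,y}$ these two projections are complementary. Granting this, set $P_i^U=F_{x,y}Q_i^U$. Then $P_i^U$ and $P_{i+1}^U$ are orthogonal, and each $P_i^U$ is the complement of $P_{i+1}^U$ inside $F_{x,y}$. Hence $P_i^U=P_{i+2}^U$. Going once around the odd cycle yields $P_0^U=P_1^U$, which combined with orthogonality gives $P_0^U=0$. By the symmetric argument $P_0^W=0$, and therefore $F_{x,y}=F_{x,y}(Q_0^U+Q_0^W)=0$.

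\emph{Parts (ii) and (iii).} Let $x$ be a $B$-root with private adjacent $A$-roots $\alpha_1,\alpha_2$, and let $y\notin\Bset(Y)$. Expanding the identity at $\alpha_1$ and then at $\alpha_2$ gives
\[
F_{x,y}F_{\alpha_1,z}=\sum_{z'}F_{x,y}F_{\alpha_1,z}F_{\alpha_2,z'}.
\]
By (i) and \eqref{eq:edgezero}, a term can be nonzero only if $z,z'\in\Aset(Y)\cap N_Y(y)$ and, moreover, $zz'\in E(Y)$. Lemma~\ref{lem:structural-color}(ii) says no such edge exists. So it suffices to know that in any nonzero term the \emph{pair} $(z,z')$ must lie jointly in $N_Y(y)$. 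Part (iii) is the same argument one level up, using (ii) and Lemma~\ref{lem:structural-color}(iii).

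\emph{Main obstacle.} The step I expect to be hardest is the projection-level fact needed in both parts: the compressions of the neighbourhood measurements to $\operatorname{Ran}F_{x,y}$ behave like a genuine quantum homomorphism of $X[N(x)]$ into $Y[N(y)]$. In particular, $F_{x,y}$ must commute with $F_{c,z}$ (respectively with $F_{\alpha,z}$) for neighbours $c$ of $x$.

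I would first try the standard trick for synchronous correlations: use the orthogonality relations to show that $F_{x,y}F_{c,z}F_{x,y'}=0$ for $y'\ne y$. Then $F_{x,y}F_{c,z}=F_{x,y}F_{c,z}F_{x,y}$, and taking adjoints gives commutation. If that fails, the fallback is to restrict to the finite-dimensional subalgebra generated by $F_{x,\cdot}$ and $F_{c_i,\cdot}$ and use a trace argument. Concretely, $\operatorname{tr}(F_{x,y}Q_0^U)$ can be compared around the odd cycle via the relations $Q_i^UQ_{i+1}^U=0$, forcing the trace $\operatorname{tr}F_{x,y}$ to vanish.
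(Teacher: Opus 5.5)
\textbf{There is a genuine gap.} Both parts of your proof rest on an unproven step. Part (i) is complete only if you assume that $F_{x,y}$ commutes with $Q_i^U$ and $Q_i^W$. Part (ii)--(iii) is complete only if you assume that every nonzero term $F_{x,y}F_{\alpha_1,z}F_{\alpha_2,z'}$ has $z'\in N_Y(y)$. You leave both to a commutation principle that does not hold.

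Projections at adjacent vertices of a quantum homomorphism need not commute, and $F_{x,y}F_{c,z}F_{x,y'}=0$ for $y'\ne y$ is false in general. Take the edge $K_2=\{x,c\}$ mapping to $K_4$. Let $F_{x,\cdot}$ be the standard-basis measurement on $\mathbb{C}^4$, and let $F_{c,\cdot}$ be the measurement in the columns of a $4\times 4$ conference matrix scaled by $1/\sqrt3$ (zero diagonal, off-diagonal entries $\pm1/\sqrt3$). This is a quantum homomorphism, yet $F_{x,y}F_{c,z}F_{x,y'}\neq 0$ whenever $y,y',z$ are distinct.

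Your sketch uses nothing about the bipartite/odd-cycle structure to obtain commutation. In the lemma's situation commutation holds only because $F_{x,y}=0$, which is the conclusion you are trying to reach. The term-by-term claim in (ii) is likewise unjustified, since $F_{x,y}$ and $F_{\alpha_2,z'}$ are not adjacent in that product.

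The trace fallback does not close the gap either. Put $t=\operatorname{tr}F_{x,y}$, $t_i^U=\operatorname{tr}(F_{x,y}Q_i^U)$ and $t_i^W=\operatorname{tr}(F_{x,y}Q_i^W)$. The available relations are $t_i^U+t_i^W=t$, $t_i^U+t_{i+1}^U\le t$ and $t_i^W+t_{i+1}^W\le t$. All of them are satisfied by $t_i^U=t_i^W=t/2$, so comparing traces around $C_5$ gives no contradiction; you would need the operator-level equality case, which is exactly the missing step.

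\textbf{The missing idea: commutation is unnecessary.} It suffices that $F_{x,y}$ is absorbed by the neighbourhood projection $Q_i^U+Q_i^W$, applied to vectors. The paper does this with $S_i=A_i-B_i$. It shows $S_iS_{i+1}P=-P$, hence $(S_i+S_{i+1})v=0$ for $v\in\operatorname{Ran}P$. Around an odd cycle this forces $S_0v=0$, contradicting $\|S_0v\|=\|v\|$.

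Your own scheme can be repaired in the same spirit. Fix $v\in\operatorname{Ran}F_{x,y}$.
\begin{enumerate}
\item Absorption gives $\|Q_i^Uv\|^2+\|Q_i^Wv\|^2=\|v\|^2$ for every $i$.
\item Since $Q_i^U+Q_{i+1}^U$ and $Q_i^W+Q_{i+1}^W$ are projections, $\|Q_i^Uv\|^2+\|Q_{i+1}^Uv\|^2\le\|v\|^2$ and $\|Q_i^Wv\|^2+\|Q_{i+1}^Wv\|^2\le\|v\|^2$.
\item By item 1 the two left-hand sides add up to $2\|v\|^2$, so both inequalities are equalities. Hence $(Q_i^U+Q_{i+1}^U)v=v$, that is, $Q_{i+1}^Uv=Q_i^Wv$, and symmetrically $Q_{i+1}^Wv=Q_i^Uv$.
\item Going once around $C_5$ flips $U$ and $W$ an odd number of times, giving $Q_0^Uv=Q_0^Wv$.
\item Therefore $Q_0^Uv=Q_0^UQ_0^Wv=0$, likewise $Q_0^Wv=0$, and $v=0$.
\end{enumerate}

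For (ii), set $C_i=\sum_{z\in\Aset(Y)\cap N_Y(y)}F_{\alpha_i,z}$. Part (i) and the edge relation for $x\alpha_i$ give $C_iF_{x,y}=F_{x,y}$ for each $i$ separately. Hence $F_{x,y}=C_1F_{x,y}=C_1C_2F_{x,y}=0$, because $C_1C_2=0$ by Lemma~\ref{lem:structural-color}(ii). The index not adjacent to $F_{x,y}$ is restricted to $N_Y(y)$ only after summing over the other index, not term by term, and that is all you need. Part (iii) is identical one level up.
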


\begin{proof}
We prove the three claims in order.

\smallskip
\noindent\emph{Proof of (i).}
Let $x$ be an $A$-root and suppose $y\notin \Aset(Y)$.  Put $P=F_{x,y}$.  The vertex $x$ is adjacent to the vertices $c_0,\ldots,c_4$ of a private $5$-cycle.  By Lemma~\ref{lem:structural-color}, $Y[N_Y(y)]$ is bipartite; fix a bipartition
\[
   N_Y(y)=N_0\sqcup N_1.
\]
For $i\in\mathbb Z_5$, define
\[
   A_i=\sum_{z\in N_0}F_{c_i,z},\qquad
   B_i=\sum_{z\in N_1}F_{c_i,z},\qquad
   C_i=A_i+B_i,\qquad S_i=A_i-B_i.
\]
Because the projections $F_{c_i,z}$, $z\in V(Y)$, form a projective measurement, the operators $A_i$ and $B_i$ are orthogonal projections, $C_i$ is a projection, and
\[
   S_i^2=C_i. \tag{1}
\]
Since $xc_i$ is an edge of $X$, the edge-zero relation gives
\[
   F_{c_i,z}P=0\qquad (z\notin N_Y(y)).
\]
Equivalently, since the projections are self-adjoint, also $PF_{c_i,z}=0$ for $z\notin N_Y(y)$.  Together with completeness for the measurement at $c_i$, this gives
\[
   C_iP=P\qquad\text{for every }i. \tag{2}
\]
Because $N_0$ and $N_1$ are independent sets in $Y[N_Y(y)]$, and because $c_ic_{i+1}$ is an edge of $X$, the edge-zero relation gives
\[
   A_iA_{i+1}=0,
   \qquad
   B_iB_{i+1}=0
   \qquad (i\in\mathbb Z_5). \tag{3}
\]
Using (2) and (3),
\[
\begin{aligned}
   S_iS_{i+1}P
   &=(A_i-B_i)(A_{i+1}-B_{i+1})P\\
   &=-(A_iB_{i+1}+B_iA_{i+1})P\\
   &=-C_iC_{i+1}P.
\end{aligned}
\]
Here
\[
   C_iC_{i+1}P=C_i(C_{i+1}P)=C_iP=P,
\]
where we use (2) and associativity of operator multiplication only; no commutativity of $C_i$ and $C_{i+1}$ is being assumed.  Hence
\[
   S_iS_{i+1}P=-P. \tag{4}
\]
The same argument with $i$ and $i+1$ interchanged gives
\[
   S_{i+1}S_iP=-P. \tag{5}
\]
Let $v\in\operatorname{Ran}(P)$.  Then (1) and (2) imply
\[
   \|S_iv\|^2=\langle S_i^2v,v\rangle=\langle C_iv,v\rangle=\|v\|^2.
\]
Equations (4) and (5) imply
\[
   (S_i+S_{i+1})^2v
   =(S_i^2+S_iS_{i+1}+S_{i+1}S_i+S_{i+1}^2)v=0.
\]
The operators $S_i$ and $S_{i+1}$ are self-adjoint, since they are differences of orthogonal projections.  Hence $S_i+S_{i+1}$ is self-adjoint and $(S_i+S_{i+1})^2$ is positive semidefinite.  Taking the inner product with $v$ gives
\[
   0=\langle (S_i+S_{i+1})^2v,v\rangle
    =\|(S_i+S_{i+1})v\|^2.
\]
Thus
\[
   (S_i+S_{i+1})v=0,
\]
so $S_{i+1}v=-S_iv$ for every $i\in\mathbb Z_5$.  Going around the odd cycle gives
\[
   S_0v=-S_4v=-S_0v,
\]
and hence $S_0v=0$.  On the other hand,
\[
   \|S_0v\|^2=\langle S_0^2v,v\rangle=\langle C_0v,v\rangle=\|v\|^2,
\]
where the last equality uses $C_0P=P$ and $v\in\operatorname{Ran}(P)$.  Thus $v=0$.  Therefore $\operatorname{Ran}(P)=0$, and $P=0$.

\smallskip
\noindent\emph{Proof of (ii).}
Let $x$ be a $B$-root and suppose $y\notin \Bset(Y)$.  Put $P=F_{x,y}$.  By construction, $x$ has two adjacent $A$-neighbors $\alpha_1,\alpha_2$.

For $i=1,2$, define
\[
   C_i=\sum_{z\in \Aset(Y)\cap N_Y(y)}F_{\alpha_i,z}.
\]
We claim that $C_iP=P$.  Indeed, by completeness at $\alpha_i$,
\[
   P=\sum_{z\in V(Y)}F_{\alpha_i,z}P.
\]
If $z\notin\Aset(Y)$, then $F_{\alpha_i,z}=0$ by part (i).  If $z\in\Aset(Y)$ but $z\notin N_Y(y)$, then $F_{\alpha_i,z}P=0$ because $x\alpha_i$ is an edge of $X$ while $yz$ is not an edge of $Y$.  Hence only terms with $z\in\Aset(Y)\cap N_Y(y)$ remain, and $C_iP=P$.

Since $y\notin\Bset(Y)$, Lemma~\ref{lem:structural-color} says that $\Aset(Y)\cap N_Y(y)$ contains no edge.  As $\alpha_1\alpha_2$ is an edge of $X$, the edge-zero condition gives
\[
   C_1C_2=0.
\]
Consequently
\[
   P=C_1P=C_1C_2P=0.
\]

\smallskip
\noindent\emph{Proof of (iii).}
Let $x$ be an $E$-root and suppose $y\notin\Eset(Y)$.  Put $P=F_{x,y}$.  By construction, $x$ has two adjacent $B$-neighbors $\beta_1,\beta_2$.  For $i=1,2$, define
\[
   D_i=\sum_{z\in \Bset(Y)\cap N_Y(y)}F_{\beta_i,z}.
\]
The same argument as in (ii), now using part (ii) in place of part (i), gives $D_iP=P$ for $i=1,2$.  Since $y\notin\Eset(Y)$, Lemma~\ref{lem:structural-color} says that $\Bset(Y)\cap N_Y(y)$ contains no edge.  Since $\beta_1\beta_2$ is an edge of $X$, the edge-zero condition gives $D_1D_2=0$.  Hence
\[
   P=D_1P=D_1D_2P=0.
\]
\end{proof}

\subsection{Endpoint forcing}

\begin{lemma}\label{lem:colored-spine}
Let $Y=H*J$.  If
\[
   y_0,y_1,\ldots,y_6
\]
is a walk in $Y$ whose colors are
\[
   E,A,A,B,B,A,E,
\]
then the walk is the ordered spine of a single copy $J_{uv}$, where $u\to v$ is an arc of $H$.
\end{lemma}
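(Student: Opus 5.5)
The proof is a step-by-step walk along the given sequence. At each step we list the neighbours of the current vertex of $Y$ that carry the next required color, and check that only the intended spine vertex survives. First I will record precisely which vertices of $Y$ carry which color. By construction $\Eset(Y)$ consists exactly of the connector vertices incident with some copy of $J$. $\Bset(Y)$ consists of the spine vertices $x_3,x_4$ of each copy together with the private $B$-roots inside the $E$ gadgets. $\Aset(Y)$ consists of the spine vertices $x_1,x_2,x_5$ of each copy together with the private $A$-roots inside all $B$ gadgets. Since all gadget vertices are private and the only edges are spine edges and gadget edges (as already used in the proof of Lemma~\ref{lem:structural-color}), the neighbourhood of each such vertex can be written down explicitly. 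Walks may repeat vertices, but this causes no trouble: we only use adjacency at each step.

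\emph{Step 1 ($y_0\in\Eset$, $y_1\in\Aset$).} The only $A$-roots adjacent to an $E$-root are spine neighbours of a connector. Private $B$-roots of $E$ gadgets are $B$-colored, and private $A$-roots of $B$ gadgets are adjacent only to their $B$-root, their partner and their cone. Hence $y_1$ is either $x_1$ or $x_5$ of some copy $J_{uv}$, with $y_0=u$ or $y_0=v$ respectively.

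\emph{Step 2 ($y_2\in\Aset$).} In the case $y_1=x_5$, the neighbours of $x_5$ are $x_4$ (a $B$-root), $x_6$ (an $E$-root) and its private cone vertices, so $x_5$ has no $A$-colored neighbour and this case is impossible. Hence $y_1=x_1$ of $J_{uv}$, $y_0=u$, and the only $A$-neighbour of $x_1$ is $x_2$, so $y_2=x_2$. \emph{Steps 3--4.} The unique $B$-neighbour of $x_2$ is $x_3$. The $B$-neighbours of $x_3$ are only $x_4$, because the gadget of $x_3$ contributes $A$-roots and $x_3$ is not a private root of any $E$ gadget. So $y_3=x_3$ and $y_4=x_4$ in the same copy.

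\emph{Step 5, the one place needing care.} The $A$-neighbours of $x_4$ are $x_5$ and the two private $A$-roots $\alpha_1,\alpha_2$ of the $B$ gadget carried by $x_4$. Step 6 eliminates the private roots: each $\alpha_i$ has neighbours $x_4$, $\alpha_{3-i}$ and its cone, none of which lies in $\Eset(Y)$. So $y_5=x_5$, and the only $E$-neighbour of $x_5$ is $x_6=v$. Therefore $(y_0,\dots,y_6)$ is exactly the ordered spine $x_0,\dots,x_6$ of $J_{uv}$, and $u\to v$ is an arc of $H$.

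I expect the main obstacle to be completeness of the neighbourhood bookkeeping. We must not overlook that private gadget roots also carry colors, that is, the private $A$-roots in $B$ gadgets and the private $B$-roots in $E$ gadgets. Step 5 is exactly where such a private root could occur, and it is excluded only one step later. The orientation asymmetry of the color sequence, used in Step 2, is what rules out traversing a copy backwards.
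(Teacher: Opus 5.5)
Your proof is correct and follows essentially the same route as the paper's. Both arguments start at the connector $y_0$ and rule out an incoming $x_5$ because it has no $A$-neighbour. Both then follow the unique correctly coloured neighbour along $x_1,x_2,x_3,x_4$, and discard the private $A$-roots of the $B$ gadget at $x_4$ because they have no $E$-neighbour. Your version only makes the colour sets and neighbourhood lists more explicit than the paper does.
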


\begin{proof}
Since $y_0$ has color $E$, it is a connector vertex of $Y$.  Its $A$-neighbors are of two possible types: the vertex $x_1$ in an outgoing copy, or the vertex $x_5$ in an incoming copy.  If $y_1$ were an incoming $x_5$, then $y_1$ would have no $A$-neighbor available for $y_2$, since its spine neighbors have colors $B$ and $E$, and its private gadget neighbors do not have color $A$.  Hence $y_1$ must be the $x_1$ of an outgoing copy.

In that copy, $x_1$ has a unique $A$-neighbor, namely $x_2$, so $y_2=x_2$.  The vertex $x_2$ has a unique $B$-neighbor, namely $x_3$, so $y_3=x_3$.  The vertex $x_3$ has a unique $B$-neighbor, namely $x_4$, so $y_4=x_4$.  The $A$-neighbors of $x_4$ are $x_5$ and the private $A$-roots in its $B$ gadget; the private $A$-roots have no $E$-neighbor, so the condition that $y_6$ has color $E$ forces $y_5=x_5$; see Figure~\ref{fig:endpoint-mechanism}.  Finally, $y_6$ is the terminal $x_6$ of the same copy.  Thus the walk is the ordered spine of a single copy.
\end{proof}

\begin{proposition}\label{prop:endpoint-forcing}
The ordered indicator $J(a,b)$ from Definition~\ref{def:indicator} is quantum endpoint-forcing.
\end{proposition}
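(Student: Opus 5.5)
The plan is to expand the product $F_{a,p}F_{b,q}$ along the spine of $J$ and show that every surviving term is indexed by a colored walk, which Lemma~\ref{lem:colored-spine} then pins down.

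Fix an irreflexive directed graph $H$, a quantum homomorphism $F:J\toq H*J$ with $Y=H*J$, and a pair $(p,q)$ that is not of the form $(u,v)$ for an arc $u\to v$ of $H$. We insert the resolution of the identity \eqref{eq:complete} at each internal spine vertex $x_1,\dots,x_5$:
\[
   F_{a,p}F_{b,q}
   =\sum_{y_1,\ldots,y_5\in V(Y)}
   F_{x_0,p}F_{x_1,y_1}F_{x_2,y_2}F_{x_3,y_3}F_{x_4,y_4}F_{x_5,y_5}F_{x_6,q}.
\]
We then show that every summand vanishes. Set $y_0=p$ and $y_6=q$, and fix one summand.

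\textbf{Edge condition.} If some consecutive pair $y_iy_{i+1}$ is not an edge of $Y$, the summand is zero. Indeed, $x_ix_{i+1}$ is an edge of $J$, so \eqref{eq:edgezero} gives $F_{x_i,y_i}F_{x_{i+1},y_{i+1}}=0$, and this product appears as a consecutive factor.

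\textbf{Color condition.} If some $y_i$ does not carry the color of $x_i$, the summand is zero. Here we apply Lemma~\ref{lem:color-localization} with $X=J$, which is legitimate because $J$ contains the required color gadgets at every spine vertex. This yields $F_{x_i,y_i}=0$ whenever $y_i$ lies outside $\Eset(Y)$, $\Aset(Y)$ or $\Bset(Y)$ respectively. This covers the terminals as well: it gives $F_{a,p}=0$ unless $p\in\Eset(Y)$, and similarly for $q$.

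\textbf{Conclusion.} Any nonzero summand therefore corresponds to a walk $y_0,\dots,y_6$ in $Y$ with color sequence $E,A,A,B,B,A,E$. By Lemma~\ref{lem:colored-spine}, such a walk is the ordered spine of a single copy $J_{uv}$. Hence $p=y_0=u$ and $q=y_6=v$ with $u\to v$ an arc of $H$, contradicting the choice of $(p,q)$. So every summand vanishes, and $F_{a,p}F_{b,q}=0$.

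The substantive inputs, namely Lemma~\ref{lem:color-localization} and Lemma~\ref{lem:colored-spine}, are already established. The only point needing care is bookkeeping: colors are defined only for root vertices of $Y$, so a vertex $y_i$ outside the relevant set is handled by the vanishing of $F_{x_i,y_i}$, not by any coloring argument. One must also note that the walk need not be a path, but Lemma~\ref{lem:colored-spine} is stated for walks, so nothing further is needed.
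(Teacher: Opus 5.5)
Your proposal is correct and matches the paper's proof essentially step for step. Like the paper, you insert completeness at $x_1,\dots,x_5$ to expand $F_{a,p}F_{b,q}$, kill non-walk summands with the edge-zero relation and miscolored summands with Lemma~\ref{lem:color-localization} applied to $X=J$, and then use Lemma~\ref{lem:colored-spine} to identify every surviving walk with the ordered spine of a single copy $J_{uv}$.
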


\begin{proof}
Let $H$ be an irreflexive directed graph, let $Y=H*J$, and let $F:J\toq Y$ be a quantum homomorphism.  Write the spine of $J$ as
\[
   x_0=a,x_1,x_2,x_3,x_4,x_5,x_6=b.
\]
Fix $p,q\in V(Y)$.  Insert completeness at $x_1,\ldots,x_5$:
\[
\begin{aligned}
   F_{a,p}F_{b,q}
   =\sum_{y_1,\ldots,y_5\in V(Y)}
     F_{x_0,p}F_{x_1,y_1}F_{x_2,y_2}F_{x_3,y_3}
     F_{x_4,y_4}F_{x_5,y_5}F_{x_6,q}.
\end{aligned}
\]
Consider one summand, and put $y_0=p$, $y_6=q$.  If $y_0,y_1,\ldots,y_6$ is not a walk in $Y$, then some consecutive pair is a nonedge of $Y$, while the corresponding pair $x_i,x_{i+1}$ is an edge of $J$; the edge-zero condition makes the summand zero.  If some $y_i$ does not have the same color as $x_i$, then Lemma~\ref{lem:color-localization} makes the corresponding projection $F_{x_i,y_i}$ zero.  Therefore a nonzero summand can occur only from a walk of color sequence $E,A,A,B,B,A,E$.  By Lemma~\ref{lem:colored-spine}, such a walk is the ordered spine of a single copy $J_{uv}$.

Consequently, if $(p,q)$ is not the ordered terminal pair $(u,v)$ of a copy $J_{uv}$, no summand survives, and $F_{a,p}F_{b,q}=0$.
\end{proof}

\begin{remark}\label{rem:quantum-indicator-method}
Proposition~\ref{prop:endpoint-forcing} is the quantum analogue of the endpoint property used in classical indicator constructions.  Classically, one controls the pair of endpoint images $(f(a),f(b))$.  Quantumly, the meaningful statement is that the product of the two terminal projections vanishes on every illegal pair:
\[
   F_{a,p}F_{b,q}=0.
\]
This says that the endpoints $a$ and $b$ cannot simultaneously have support on an unordered, reversed, or otherwise illegal terminal pair.  In this projection-level sense, the ordered terminals of $J$ remember the direction of an arc even though the replacement graph is undirected.
\end{remark}

\section{Undirected universality}

For a finite disjoint union $D$ of clockwise directed cycles, define
\[
   \Phi(D)=D*J,
\]
where $J$ is the endpoint-forcing indicator constructed above.  This is an undirected graph.

\begin{lemma}\label{lem:indicator-reflects}
Let $D,D'$ be finite disjoint unions of clockwise directed cycles.  Then
\[
   \Phi(D)\toq \Phi(D')
   \quad\Longleftrightarrow\quad
   D\to D'.
\]
\end{lemma}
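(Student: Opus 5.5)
The forward direction is routine and I would dispose of it first. A classical homomorphism $f:D\to D'$ sends each arc $u\to v$ to an arc $f(u)\to f(v)$. It therefore extends to a classical homomorphism $\Phi(D)\to\Phi(D')$: map the connector $u$ to $f(u)$, and map every nonterminal vertex of the copy $J_{uv}$ to the corresponding vertex of the copy $J_{f(u)f(v)}$. Since $D$ has no isolated vertices, this is a graph homomorphism, hence a quantum homomorphism.

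For the converse, let $F:\Phi(D)\toq\Phi(D')$ be a quantum homomorphism. The plan is to restrict $F$ to connector vertices and show that this restriction is a directed quantum homomorphism $D\toq D'$; Corollary~\ref{cor:dicycles-q-classical} then gives $D\to D'$.

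\textbf{Step 1: connectors land on connectors.} Every vertex $u$ of $D$ is incident with some arc, so it is an $E$-root in $\Phi(D)$. By Lemma~\ref{lem:color-localization}(iii), applied with $X=\Phi(D)$ and $Y=D'*J$, we get $F_{u,y}=0$ for $y\notin\Eset(\Phi(D'))$. The set $\Eset(\Phi(D'))$ is exactly the set of connectors $V(D')$. So for $u,v\in V(D)$ the projections $G_{u,v}:=F_{u,v}$ with $v\in V(D')$ sum to $\Id$ and are pairwise orthogonal.

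\textbf{Step 2: arcs go to arcs.} Fix an arc $u\to v$ of $D$. Restricting $F$ to the copy $J_{uv}\subseteq\Phi(D)$ gives a quantum homomorphism $J\toq D'*J$ with $a\mapsto u$ and $b\mapsto v$. Proposition~\ref{prop:endpoint-forcing} then yields $F_{u,p}F_{v,q}=0$ whenever $(p,q)$ is not an arc of $D'$. This is exactly the directed edge-zero condition, so $G$ is a quantum homomorphism $D\toq D'$. Combined with Corollary~\ref{cor:dicycles-q-classical}, this finishes the proof.

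The only delicate point is whether Proposition~\ref{prop:endpoint-forcing} really applies to a restriction. The restriction of a quantum homomorphism to an induced subgraph is again a quantum homomorphism, and all three defining relations are inherited. The localization lemma, however, is invoked with gadget vertices of the copy $J_{uv}$, whose gadgets lie entirely inside that copy, so this is also fine. The target is $H*J$ with $H=D'$, which is irreflexive. I expect no real obstacle beyond checking that $E$-roots of $\Phi(D')$ are precisely the connectors. This holds because every vertex of a cycle union is incident with an arc, and no nonterminal vertex is an $E$-root: the $E$ gadgets are attached only at $x_0$ and $x_6$.
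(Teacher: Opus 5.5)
Your proof is correct and follows the paper's argument. Both restrict $F$ to the connector vertices, apply Proposition~\ref{prop:endpoint-forcing} to each copy $J_{uv}$ to get the directed edge-zero relations, and finish with Corollary~\ref{cor:dicycles-q-classical}. The only difference is in showing $F_{u,p}=0$ for nonconnectors $p$: you get this directly from Lemma~\ref{lem:color-localization}(iii) (connectors are $E$-roots of $\Phi(D)$), whereas the paper derives it from endpoint forcing by inserting completeness at the head $w$ of an arc $u\to w$; both routes are valid.
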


\begin{proof}
If $D\to D'$ is a classical homomorphism, then every arc $u\to v$ of $D$ is sent to an arc $f(u)\to f(v)$ of $D'$.  Mapping each copy $J_{uv}$ identically onto the copy $J_{f(u)f(v)}$ gives a classical homomorphism $\Phi(D)\to\Phi(D')$, and hence a quantum homomorphism.

Conversely, suppose $\Phi(D)\toq\Phi(D')$, represented by projections $F_{x,y}$.  Let $U=V(D)$ and $U'=V(D')$ be the connector vertices in $\Phi(D)$ and $\Phi(D')$.  In $\Phi(D')$, the ordered terminal pairs of copies of $J$ are exactly the pairs $(v,z)$ with $v,z\in U'$ and $v\to z$ an arc of $D'$.  Since every vertex of $D$ is the tail of an arc, Proposition~\ref{prop:endpoint-forcing} implies that for every connector $u\in U$ and every nonconnector $p\in V(\Phi(D'))$,
\[
   F_{u,p}=0.
\]
Indeed, choose an arc $u\to w$ of $D$ and use
\[
   F_{u,p}=F_{u,p}\sum_qF_{w,q}=\sum_qF_{u,p}F_{w,q};
\]
endpoint forcing makes every summand zero when $p$ is not a connector.

Thus, for $u\in U$ and $v\in U'$, set
\[
   Q_{u,v}=F_{u,v}.
\]
For each $u\in U$, the projections $Q_{u,v}$ sum to $\Id$.  In $\Phi(D')$, the ordered terminal pairs of copies of $J$ are exactly the pairs $(v,z)$ of connector vertices for which $v\to z$ is an arc of $D'$.  Indeed, each arc of $D'$ gives one copy of $J$, with the first terminal identified with the tail and the second terminal identified with the head.  Therefore, if $u\to w$ is an arc of $D$, endpoint forcing gives
\[
   Q_{u,v}Q_{w,z}=0
\]
whenever $v\to z$ is not an arc of $D'$.  Hence the projections $Q_{u,v}$ define a quantum homomorphism $D\toq D'$.  By Corollary~\ref{cor:dicycles-q-classical}, $D\to D'$.
\end{proof}

\begin{remark}\label{rem:general-reflection}
The preceding proof uses only the endpoint-forcing property and the fact that every connector vertex of the source is incident with a directed edge.  Thus the same argument applies more generally as follows: if $G$ is an irreflexive directed graph with no isolated vertices and $H$ is any irreflexive directed graph, then
\[
   G*J\toq H*J
   \quad\Longrightarrow\quad
   G\toq H.
\]
Indeed, the endpoint-forcing condition first shows that the projections attached to connector vertices of $G*J$ have no support on nonconnector vertices of $H*J$; if a connector is the tail of an arc one uses completeness at the head, and if it is the head of an arc one uses completeness at the tail.  The remaining projections then define a quantum homomorphism $G\toq H$.  We only need the directed-cycle case for universality, but this reflection statement indicates that the indicator is not tailored only to cycles: it can serve as a directed-to-undirected encoding mechanism whenever isolated vertices are not involved.
\end{remark}

\begin{theorem}\label{thm:undirected-universal}
The quantum homomorphism quasi-order of finite undirected graphs is countably universal.  Consequently, the quotient partial order is countably universal.
\end{theorem}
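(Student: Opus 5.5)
The plan is to compose the classical universality of $\DiCycles$ with the map $\Phi$ and use Lemma~\ref{lem:indicator-reflects} as the if-and-only-if transfer. Let $(P,\le_P)$ be countable. By the classical universality of the homomorphism order on $\DiCycles$ \cite{FialaHubickaLong}, there is an assignment $p\mapsto D_p\in\DiCycles$ with $p\le_P q$ if and only if $D_p\to D_q$. Put $G_p=\Phi(D_p)=D_p*J$, which is a finite undirected graph. Lemma~\ref{lem:indicator-reflects} then gives
\[
   G_p\toq G_q\iff D_p\to D_q\iff p\le_P q .
\]
This is exactly an order embedding of $P$ into the quantum homomorphism quasi-order of finite undirected graphs.

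The only point to check before invoking Lemma~\ref{lem:indicator-reflects} is that its hypotheses hold. That lemma is stated for finite disjoint unions of clockwise directed cycles, and its proof uses that every vertex of $D_p$ is the tail of an arc. This holds automatically in $\DiCycles$ (cycles of length at least $2$; if the classical embedding could produce a loop $\Cyc_1$, one replaces it, or notes that the cited construction uses lengths $\ge 2$, as is implicit in Proposition~\ref{prop:cycles}). Injectivity of $p\mapsto G_p$, and the descent to the quotient, follow from the if-and-only-if statement. If $p\neq q$, antisymmetry in $P$ rules out having both $p\le_P q$ and $q\le_P p$, so $G_p\not\equiv_q G_q$. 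In particular $G_p\ne G_q$, and the classes $[G_p]$ form an induced suborder of the quotient isomorphic to $P$.

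There is essentially no obstacle at this stage, because all the quantum content sits in Proposition~\ref{prop:endpoint-forcing} and Lemma~\ref{lem:indicator-reflects}. The step needing the most care is confirming that the cited universality result really produces objects inside $\DiCycles$ with all cycle lengths $\ge2$, so that $D_p*J$ is well-defined as a loopless simple graph and Lemma~\ref{lem:indicator-reflects} applies verbatim. For finite $P$ this can be bypassed entirely with the explicit prime-product choice $D_p=\Cyc_{N_p}$ from the introduction.
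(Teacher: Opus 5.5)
Your proposal is correct and is essentially the paper's proof: compose the classical universality of $\DiCycles$ with $D\mapsto\Phi(D)=D*J$, use Lemma~\ref{lem:indicator-reflects} as the if-and-only-if transfer, and let that same equivalence give injectivity and the descent to the quotient. Your extra checks (all cycle lengths at least $2$, every vertex of $D_p$ being the tail of an arc) are bookkeeping that the paper leaves implicit in its loopless setting.
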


\begin{proof}
By Lemma~\ref{lem:indicator-reflects}, the map $D\mapsto \Phi(D)$ embeds the classical homomorphism order on $\DiCycles$ as an induced suborder of the quantum homomorphism quasi-order of finite undirected graphs.  Since the classical order on $\DiCycles$ is countably universal \cite{FialaHubickaLong}, the undirected quantum homomorphism quasi-order is countably universal.  The if-and-only-if statement in Lemma~\ref{lem:indicator-reflects} also shows that the embedding descends to the quotient partial order.

\end{proof}

The following corollary records the finite version as a reusable recipe.  The point is not only that the desired finite suborder exists: the same fixed undirected indicator $J$ works for every poset, and all order information is encoded in the integers $N_p$.

\begin{corollary}[Explicit finite representation]\label{cor:finite-posets}
Let $(P,\le_P)$ be a finite partial order.  Then there are finite undirected graphs $G_p$, $p\in P$, such that for all $p,q\in P$,
\[
   p\le_P q
   \quad\Longleftrightarrow\quad
   G_p\toq G_q.
\]
Consequently, the quantum homomorphism order of finite graphs contains antichains of arbitrary finite size, chains of arbitrary finite length, and induced copies of every finite partial order.  Moreover, the representation may be chosen by the explicit prime-divisibility and indicator-replacement construction described below.
\end{corollary}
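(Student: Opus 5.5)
The plan is to carry out the prime-divisibility recipe from the introduction and then invoke Lemma~\ref{lem:indicator-reflects}. Given the finite poset $(P,\le_P)$, I will choose pairwise distinct primes $\pi_r$, one for each $r\in P$, and set
\[
   N_p=\prod_{r:\,p\le_P r}\pi_r ,\qquad G_p=\Phi(\Cyc_{N_p})=\Cyc_{N_p}*J .
\]
Each $N_p\ge 2$, because $p\le_P p$ ensures $\pi_p\mid N_p$. So $\Cyc_{N_p}$ is an honest loopless directed cycle in $\DiCycles$, and $G_p$ is a finite undirected graph.

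The first step is the arithmetic claim $p\le_P q\iff N_q\mid N_p$. Suppose $p\le_P q$. Every $r$ with $q\le_P r$ also satisfies $p\le_P r$ by transitivity. Thus the set of primes dividing $N_q$ is contained in the set dividing $N_p$. Both numbers are squarefree, so $N_q\mid N_p$. Conversely, suppose $N_q\mid N_p$. Since $\pi_q\mid N_q$, we get $\pi_q\mid N_p$, which means $p\le_P q$ because the primes are distinct.

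The second step chains the earlier results. Proposition~\ref{prop:cycles} gives $\Cyc_{N_p}\to\Cyc_{N_q}$ if and only if $N_q\mid N_p$. Lemma~\ref{lem:indicator-reflects}, applied with $D=\Cyc_{N_p}$ and $D'=\Cyc_{N_q}$, gives $\Phi(\Cyc_{N_p})\toq\Phi(\Cyc_{N_q})$ if and only if $\Cyc_{N_p}\to\Cyc_{N_q}$. Combining these with the first step yields $p\le_P q\iff G_p\toq G_q$.

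For the consequences, note that the equivalence forces injectivity modulo $\equiv_q$. If $p\ne q$, antisymmetry rules out having both $p\le_P q$ and $q\le_P p$, so $G_p\not\equiv_q G_q$. Specializing $P$ to an antichain, a chain, or an arbitrary finite poset then gives the listed induced suborders.

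There is no real obstacle here, since all the quantum content is already contained in Lemma~\ref{lem:indicator-reflects}. The only points needing care are that $N_p\ge2$, so that Proposition~\ref{prop:cycles} and the cycle definition apply, and that squarefreeness is what makes divisibility equivalent to inclusion of prime sets.
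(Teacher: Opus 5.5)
Your proposal is correct and matches the paper's proof: it uses the same encoding of upper sets by products of distinct primes, the same arithmetic check that $p\le_P q\iff N_q\mid N_p$, and the same chain through Proposition~\ref{prop:cycles} and Lemma~\ref{lem:indicator-reflects} with $G_p=\Phi(\Cyc_{N_p})$. Your explicit checks that $N_p\ge 2$ and that the $N_p$ are squarefree are points the paper leaves implicit, and they do no harm.
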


\begin{proof}
Choose distinct primes $\pi_r$, one for each $r\in P$, and define
\[
   N_p=\prod_{r:\,p\le_P r}\pi_r.
\]
Then $p\le_P q$ if and only if $N_q$ divides $N_p$.  Indeed, if $p\le_P q$, then every element above $q$ is also above $p$, so the set of prime factors of $N_q$ is contained in the set of prime factors of $N_p$.  Conversely, if $N_q\mid N_p$, then the prime $\pi_q$ divides $N_p$, and hence $q$ lies in the upper set of $p$, i.e. $p\le_P q$.

Set
\[
   G_p=\Phi(\Cyc_{N_p}).
\]
By Proposition~\ref{prop:cycles} and Lemma~\ref{lem:indicator-reflects},
\[
   G_p\toq G_q
   \quad\Longleftrightarrow\quad
   \Cyc_{N_p}\to \Cyc_{N_q}
   \quad\Longleftrightarrow\quad
   N_q\mid N_p
   \quad\Longleftrightarrow\quad
   p\le_P q.
\]
The final statement follows by applying the construction to finite antichains, finite chains, and arbitrary finite posets.  Since the indicator $J$ is fixed once and for all, this gives an explicit poset-to-undirected-graph encoding inside the quantum homomorphism order.
\end{proof}

\begin{remark}
The preceding proof should be viewed as a usable construction rather than only as an existence proof.  Once the desired finite order pattern is specified, the integers $N_p$ are obtained from its upper sets, and the graphs are obtained by applying the same fixed undirected indicator to the corresponding directed cycles.  The construction therefore supplies a flexible source of examples for testing conjectures or building gadgets in the quantum homomorphism order.

The same proof gives concrete examples.  If $p_1,\ldots,p_k$ are distinct primes, then
\[
   \{\Phi(\Cyc_{p_i}):1\le i\le k\}
\]
forms a $k$-element antichain.  The graphs
\[
   \Phi(\Cyc_{2^k}),\ \Phi(\Cyc_{2^{k-1}}),\ \ldots,\ \Phi(\Cyc_2)
\]
form a chain of length $k$ in the displayed order.  For the diamond poset $\hat 0<a,b<\hat 1$ with $a$ and $b$ incomparable, one may take
\[
   N_{\hat 0}=210,
   \qquad
   N_a=6,
   \qquad
   N_b=10,
   \qquad
   N_{\hat 1}=2.
\]
Here the divisibility direction is reversed relative to the order, because
\[
   \Cyc_m\to\Cyc_n
   \quad\text{holds exactly when}\quad
   n\mid m.
\]
Thus $\Phi(\Cyc_{210})$ maps quantumly to $\Phi(\Cyc_6)$ and $\Phi(\Cyc_{10})$, both of those map quantumly to $\Phi(\Cyc_2)$, while $\Phi(\Cyc_6)$ and $\Phi(\Cyc_{10})$ are incomparable; see Fig.~\ref{fig:diamond-encoding}.
\end{remark}

\begin{figure}[htbp]
\centering
\begin{tikzpicture}[scale=1.0,
    point/.style={circle,draw,fill=white,inner sep=2pt,minimum size=16pt},
    lab/.style={font=\small},
    tiny/.style={font=\scriptsize},
    edge/.style={thick}
]
% left: diamond order
\node[point] (zero) at (0,0) {$\hat 0$};
\node[point] (a) at (-1.15,1.15) {$a$};
\node[point] (b) at (1.15,1.15) {$b$};
\node[point] (one) at (0,2.30) {$\hat 1$};
\draw[edge] (zero)--(a)--(one)--(b)--(zero);
\node[lab] at (0,2.85) {diamond order};
\node[tiny,left=7pt of zero] {$N_{\hat 0}=210$};
\node[tiny,left=7pt of a] {$N_a=6$};
\node[tiny,right=7pt of one] {$N_{\hat 1}=2$};
\node[tiny,right=7pt of b] {$N_b=10$};
% right: replacement notation
\node[lab,align=center] at (4.8,1.70) {replace cycles by indicators};
\node[lab,align=center] at (4.8,1.05) {$G_p=\Phi(\overrightarrow C_{N_p})$};
\node[lab,align=center] at (4.8,0.42) {$G_p\to_q G_q\iff p\le q$};
\draw[-{Latex[length=2.2mm]},thick] (1.9,-0.25)--(3.35,-0.25);
\end{tikzpicture}
\caption{A concrete finite-poset encoding.  The diamond order $\hat 0<a,b<\hat 1$ is represented by the divisibility pattern $210\to 6,10\to 2$.  After applying the replacement construction, $G_p=\Phi(\overrightarrow C_{N_p})$, the same comparability pattern is realized by quantum homomorphisms.}
\label{fig:diamond-encoding}
\end{figure}

Combining Theorems~\ref{thm:directed-universal} and~\ref{thm:undirected-universal} proves Theorem~\ref{thm:main}.

\section{Concluding remarks}

The proof deliberately avoids quantum cores, quantum homomorphism images, measurement graphs, and universal $C^*$-algebras.  The endpoint-forcing indicator is a local finite gadget whose proof uses only the projection relations in the definition of a quantum homomorphism, namely completeness of the measurements and the edge-zero relations.  Its role is to replace the classical pointwise endpoint condition by the projection-level condition that illegal products $F_{a,p}F_{b,q}$ vanish.

The construction also shows that ordered indicators are natural objects for encoding directed information into undirected quantum homomorphism order.  The terminals of the indicator need not be interchangeable; direction is encoded by the asymmetric color sequence on the spine.  In the present application this gives a concrete way to transfer the directed-cycle universality construction to finite undirected graphs without introducing extra quantum arrows.

The explicit finite representation may be useful as a source of examples and gadgets.  Whenever a construction requires a prescribed finite pattern of quantum-homomorphic comparability and incomparability, the recipe in Corollary~\ref{cor:finite-posets} produces such a pattern by finite undirected graphs.

Several natural questions remain.  One can ask for a systematic quantum analogue of classical indicator and arrow constructions, for smaller or more restricted endpoint-forcing indicators, and for universality inside sparse classes such as planar or bounded-degree graphs.  The reflection statement in Remark~\ref{rem:general-reflection} also suggests asking when a replacement $G\mapsto G*J$ reflects, or preserves, quantum homomorphisms beyond the directed-cycle setting.  Finally, it would be interesting to know whether endpoint forcing has robust or model-dependent variants, for instance for approximate, commuting-operator, or non-signalling homomorphisms.

\section*{Declaration of generative AI and AI-assisted technologies in the writing process}
During the preparation of this work the author used ChatGPT to assist with drafting, rewriting, and language editing. After using this tool, the author reviewed and edited the content as needed and takes full responsibility for the content of the manuscript.

\bibliographystyle{abbrvnat}
\bibliography{references}

\end{document}